\newtheorem{theorem}{Theorem}[section]
\newtheorem{proposition}[theorem]{Proposition}
\newtheorem{lemma}[theorem]{Lemma}
\newtheorem{corollary}[theorem]{Corollary}
\theoremstyle{definition}
\newtheorem{example}[theorem]{Example}
\newtheorem{problem}[theorem]{Problem}
\theoremstyle{remark}
\newtheorem{remark}[theorem]{Remark}
\numberwithin{equation}{section}
\begin{document}

\bibliographystyle{plain}
\title{Essential Spectra of Weighted Composition Operators Induced by Elliptic Automorphisms}

\author[X.T. Dong, Y.X. Gao and Z.H. Zhou] {Xing-Tang Dong, Yong-Xin Gao$^{*}$, and Ze-Hua Zhou}

\address{\newline  Xing-Tang Dong,\newline School of Mathematics, Tianjin University, Tianjin 300354, P.R. China.}

\email{dongxingtang@163.com}

\address{\newline  Yong-Xin Gao\newline  School of Mathematical Sciences and LPMC, Nankai University, Tianjin 300071, P.R. China}

\email{tqlgao@163.com}

\address{\newline Ze-Hua Zhou\newline School of Mathematics, Tianjin University, Tianjin 300354, P.R. China.}
\email{zehuazhoumath@aliyun.com; zhzhou@tju.edu.cn}

\keywords{Weighted Composition operator; Essential spectrum; Elliptic automorphism; Weighted Bergman space; Maximal ideal space}
\subjclass[2010]{Primary 47B33; Secondary 46E15, 47B48.}

\date{}
\thanks{\noindent $^*$Corresponding author.\\
This work is supported in part by the National Natural Science
Foundation of China (Grant Nos. 11771323, 12001293); and the Natural Science Foundation of Tianjin City of China (Grant No. 19JCQNJC14700).}

\begin{abstract}
The spectrum of a weighted composition operator $C_{\psi, \varphi}$ who is induced by an automorphism has been investigated for over fifty years. However,  many results are got only under the condition that the weight function $\psi$ is continuous up to the boundary. In this paper we study the spectra and essential spectra of $C_{\psi,\varphi}$ on weighted Bergman spaces when $\varphi$ is an elliptic automorphism, without the assumption that $\psi$ is continuous up to the boundary.
\end{abstract}

\maketitle

\section{Introduction}

Let $D$ be the open unit disk in complex plane $\mathbb{C}$. And $H(D)$ denote the set of all analytic functions on $D$.  Given an analytic self-map $\varphi$ of $D$ and a function $\psi\in H(D)$, one can define a linear \textit{weighted composition operator} $C_{\psi,\varphi}$ on $H(D)$ by
$$C_{\psi,\varphi}f=\psi\cdot f\comp\varphi$$
for all $f\in H(D)$. By taking $\psi=1$ one get the \textit{composition operator} $C_\varphi$. And putting $\varphi$ be the identity map on $D$, gives the \textit{analytic multiplication} $M_\psi$.

The spectral properties of bounded weighted composition operators on kinds of analytic function spaces have been actively investigated over the past fifty years. In 1978, Kamowitz \cite{Kam} determined the spectrum of $C_{\psi,\varphi}$ on the disk algebra $A(D)=H(D)\cap C(\overline{D})$, when $\varphi$ is an automorphism of $D$. In the past ten years, several papers carried Kamowitz's project to different Banach spaces of analytic functions, such as Hardy space and Bergman space, see \cite{my,11,13}. However, the results of these papers have a fatal shortcoming: most of the results in \cite{my,11,13} are got under the condition that the weight function $\psi$ of the operator $C_{\psi,\varphi}$ belongs to $A(D)$, that is, $\psi$ is continuous up to the boundary of $D$. This condition is quite unnatural when we consider the property of $C_{\psi,\varphi}$ on a function space other than $A(D)$.

When $\psi$ is not continuous up to the boundary of $D$, the spectral properties of $C_{\psi,\varphi}$ can be much more complicated, and new methods  are needed to overcome the difficulties. Quite recently, Kitover and Orhon in \cite{19} give some descriptions about the spectrum of $C_{\psi,\varphi}$ on variety kinds of function spaces when $\varphi$ is a rotation, without requiring that $\psi\in A(D)$.

In this paper we investigate the spectrum and essential spectrum of weighted composition operator $C_{\psi,\varphi}$, when $\varphi$ is an elliptic automorphism, on weighted Bergman spaces without the assumption that $\psi\in A(D)$. It's an open question posed in \cite{19} (Problem 7.13 therein). For $\alpha>-1$ and $p\geqslant 1$, the weighted Bergman space $A_\alpha^p$ is a Banach space consisting of all analytic functions on $D$ such that
$$||f||^p=\int_D|f(z)|^pdA_\alpha(z)<\infty,$$
where the measure
$$dA_\alpha(z)=\frac{1+\alpha}{\pi}(1-|z|^2)^\alpha dA(z)$$
is normalized on $D$. Here $||f||$ is the norm of $f$ in $A_\alpha^p$. An elliptic automorphism is an biholomorphic map from $D$ to $D$ that has a unique fixed point in $D$. Note that if $\varphi$ is an elliptic automorphism, then the necessary and sufficient condition for $C_{\psi,\varphi}$ to be bounded on $A_\alpha^p$ is that $\psi$ is a bounded analytic function on $D$.

Our main results are in Section 5 and 6. In Section 5, we give a complete description of the spectrum and essential spectrum of $C_{\psi,\varphi}$ on weighted Bergman spaces when it is invertible. The spectrum and essential spectrum of a non-invertible weighted composition operator $C_{\psi,\varphi}$ is discussed in Section 6. We will show that the essential spectrum of $C_{\psi,\varphi}$ depends much on the location of the zeros of $\psi$ in the maximal ideal space of bounded analytic functions on $D$.

The main results of this paper are Corollary \ref{main1}, Corollary \ref{main2}, and Theorem \ref{main3}.

\section{Preliminaries}

In this section we will recall some basic facts about the maximal ideal space of bounded analytic functions on $D$, which is a key tool in our discussion through out this paper. Let $H^\infty$ denote the space of bounded analytic functions defined on $D$. Equipped with the supremum norm $\|\cdot\|_\infty$, $H^\infty$ is a Banach algebra.

For a Banach algebra $A$, the \textit{maximal ideal space} of $A$ is the collection of all non-zero multiplicative linear functionals on $A$,  equipped with the weak * topology induced from the dual space of $A$. We use $\mathfrak{M}_\infty$ to denote maximal ideal space of $H^\infty$. It is easy to check that $\mathfrak{M}_\infty$ is a compact Hausdorff space. The unit disk $D$ can be embedded homeomorphically into $\mathfrak{M}_\infty$, in a way that each point in $D$ is regarded as an evaluation functional.

The \textit{Gelfand transform} of a function $f\in H^\infty$, denoted by $\hat{f}$, is defined as
$$\hat{f}(m)=m(f) \quad, \quad\forall m\in\mathfrak{M}_\infty.$$
Obviously, $\hat{f}$ is a continuous extension of $f$ from $D$ to $\mathfrak{M}_\infty$. Thus, $H^\infty$ can be seen as a subalgebra of $C(\mathfrak{M}_\infty)$

Let $L^\infty(\partial D)$ be the space of $L^\infty$ functions on the unit circle $\partial D$. Then it is also a Banach algebra. We use $\mathfrak{M}_{L^\infty}$ to denote the maximal ideal space of $L^\infty(\partial D)$. It is well known that $\mathfrak{M}_{L^\infty}$ is exactly the Shilov boundary of $\mathfrak{M}_\infty$, that is, $\mathfrak{M}_{L^\infty}$ is the smallest close set contained in $\mathfrak{M}_\infty$ satisfying that
$$||f||_\infty=\sup_{m\in\mathfrak{M}_{L^\infty}}|f(m)|$$
for all $f\in H^\infty$.

The Gelfand transform of a function $f\in L^\infty(\partial D)$, also denoted by $\hat{f}$, is defined as
$$\hat{f}(m)=m(f) \quad, \quad\forall m\in\mathfrak{M}_{L^\infty}.$$
Similar with the case in $H^\infty$, for each $f\in L^\infty(\partial D)$, $\hat{f}$ is a continuous function on $\mathfrak{M}_{L^\infty}$. In the rest part of this paper, we shall always identify a function $f$ with its Gelfand transform $\hat{f}$ whenever $f$ belongs to either $H^\infty$ or $L^\infty(\partial D)$.

Since $H^\infty$ is a logmodular algebra, see Lemma \ref{log} in Section 4, according to Theorem 4.2 in \cite{BAF} there exists an unique positive Borel measure $\mu_0$ on $\mathfrak{M}_{L^\infty}$ such that $\mu_0(\mathfrak{M}_{L^\infty})=1$ and
$$f(0)=\int_{\mathfrak{M}_{L^\infty}}fd\mu_0$$
for all $f\in H^\infty$. $\mu_0$ is called the \textit{representing measure for the point $0$}. Moreover, for any $w\in D$ and $f\in H^\infty$, we have
\begin{align}\label{Poisson}
f(w)=\int_{\mathfrak{M}_{L^\infty}}fP_wd\mu_0,
\end{align}
where $P_w(\text{e}^{i\theta})=\frac{1-|w|^2}{|\text{e}^{i\theta}-w|^2}$ is the Poisson kernel for the point $w$. In fact, by the Riesz representation theorem, $\mu_0$ is determined by the linear functional $$g\mapsto\frac{1}{2\pi}\int_0^{2\pi}g(\text{e}^{i\theta})d\theta$$ for $g\in L^\infty(\partial D)$.

Suppose $f\in H^\infty$ is an inner function, then $f$ is unimodular in $L^\infty(\partial D)$, Therefore one have $|f(m)|=1$ for all $m\in\mathfrak{M}_{L^\infty}$. As a consequence, if $v$ is the outer part of a function $\psi\in H^\infty$, then $|v(m)|=|\psi(m)|$ for all $m\in\mathfrak{M}_{L^\infty}$. This fact will be used repeatedly in this paper.

For more information about the maximal ideal space of $H^\infty$, one can refer to Chapter \uppercase\expandafter{\romannumeral10} in \cite{BAF}. And some recent results on the topology structure of the maximal ideal space of $H^\infty$ can be found in \cite{Bru}.

\section{Notations}

The notations introduced in this section will be used throughout this paper.

We use $\mathbb{N}$ to denote the set of all nonnegative integers, and $\mathbb{Z}$ to denote the set of all integers.

$(H^\infty)^{-1}$ denote the collection of all invertible functions in $H^\infty$. That is, $$(H^\infty)^{-1}=\{f : f\in H^\infty , 1/f\in H^\infty\}.$$

Let $\varphi$ be an analytic self-map of the unit disc $D$. Then by $\varphi_n$ we mean the $n$-th iteration of the map $\varphi$, here $n\in\mathbb{N}$. Set $\varphi_0(z)=z$ be the identity. If $\varphi$ is an automorphism, then we denote by $\varphi_{-1}$ the inverse of $\varphi$, and $\varphi_{-n}$ is the $n$-th iteration of $\varphi_{-1}$ for $n\in\mathbb{N}$.

Suppose $\psi\in H^\infty$. For a given analytic self-map $\varphi$ of $D$, define $$\psi_{(n)}=\prod_{j=0}^{n-1}\psi\comp\varphi_j$$ for $n\in\mathbb{N}$. Set $\psi_{(0)}(z)=1$. Define $\rho_{\psi,\varphi}=\lim_{n\to\infty}||\psi_{(n)}||_\infty^{1/n}$.

For a weighted composition operator $C_{\psi,\varphi}$, we denote its the spectrum on $A^p_\alpha$ by $\sigma(C_{\psi,\varphi})$. Also the follows notations are used to denote the different parts of the spectrum of $C_{\psi,\varphi}$:

$\sigma_{e}(C_{\psi,\varphi})=\{\lambda\in\mathbb{C} : C_{\psi,\varphi}-\lambda \text{ is not Fredholm}\}$ is the essential spectrum;

$\sigma_{ap}(C_{\psi,\varphi})=\{\lambda\in\mathbb{C} : C_{\psi,\varphi}-\lambda \text{ is not bounded from below}\}$ is the approximate point spectrum;

$\sigma_{p}(C_{\psi,\varphi})=\{\lambda\in\mathbb{C} : C_{\psi,\varphi}-\lambda \text{ is not an injection}\}$ is the set of eigenvalues.

Each analytic self-map $\varphi$ of $D$ has a natural extension, also denoted by $\varphi$ in this paper, as a self-map of $\mathfrak{M}_\infty$, which is defined as follow:
$$\varphi(m)(f)=m(f\comp\varphi) \quad, \quad\forall f\in H^\infty, \forall m\in\mathfrak{M}_\infty.$$
When $\varphi(z)=\text{e}^{i\theta}z$ is a rotation, then for any $m\in\mathfrak{M}_\infty$ we will use $\text{e}^{i\theta}m$ to denote $\varphi(m)$. This is an abuse of notation, but it will cause no ambiguity in this paper.

Define $\text{e}^{i\theta}E=\{\text{e}^{i\theta}x : x\in E\}$, here $E$ is a subset of either $\mathfrak{M}_{\infty}$ or $\partial D$.

\section{Spectral Radius}

Let $\varphi$ be an elliptic automorphism of $D$. The \textit{order} of $\varphi$ is the smallest positive integer $n$ such that $\varphi_n(z)=z$ is the identity on $D$. If such $n$ does not exist, then we say $\varphi$ is of order $\infty$. When the order of $\varphi$ is finite, say $n_0\in\mathbb{N}$, then $C_{\psi,\varphi}^{n_0}=M_{\psi_{(n_0)}}$ is an analytic multiplication. So in this case, it is not difficult to figure out the spectrum of $C_{\psi,\varphi}$. For a complete discussion, see \cite{my} for example.

In this section we will investigate the spectral radius of $C_{\psi,\varphi}$ where $\varphi$ is an elliptic automorphism of order $\infty$. Assume that $a\in D$ is the fixed point of $\varphi$. Let $\varphi_a(z)=\frac{a-z}{1-\overline{a}z}$ be the involution automorphism that exchanges $0$ and $a$. Then $\tau=\varphi_a\comp\varphi\comp\varphi_a$ is an automorphism fixing the point $0$, hence a rotation. A simple calculation shows that
\begin{align}\label{xs}
C_{\varphi_a}^{-1}C_{\psi,\varphi}C_{\varphi_a}=C_{\psi\comp\varphi_a,\tau}.
\end{align}
So $C_{\psi,\varphi}$ is similar to the weighted composition operator $C_{\psi\comp\varphi_a,\tau}$, which means that they have the same
spectrum. Note that a rotation is of order $\infty$ if and only if it is an irrational one. Therefore in the rest part of this paper, we shall just handle the case when $\varphi$ is an irrational rotation.

It is easy to check that
$$C_{\psi,\varphi}^n=C_{\psi_{(n)},\varphi_n}.$$
Since $C_{\varphi_n}$ is unitary on $A_\alpha^p$ when $\varphi$ is a rotation, the spectral radius of $C_{\psi,\varphi}$ is $$\rho_{\psi,\varphi}=\lim_{n\to\infty}||\psi_{(n)}||_\infty^{1/n}.$$

The following theorem, known as the Birkhoff's Ergodic Theorem, is an useful tool in our discussion.

\begin{theorem}\label{erg1}
Suppose $(X,\mathfrak{F},\mu)$ is a probability space. Let $T$ be a surjective map from $X$ onto itself such that for any $A\in\mathfrak{F}$, we have $T^{-1}A\in\mathfrak{F}$ and $\mu(T^{-1}A)=\mu(A)$. If $T$ is ergodic, then for any $f\in L^1(X,\mu)$ we have
$$\lim_{n\to\infty}\frac{1}{n}\sum_{k=1}^{n}f(T^kx)=\int_Xfd\mu$$
for $\mu$-almost every $x\in X$.
\end{theorem}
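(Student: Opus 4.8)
The plan is to take the classical route to Birkhoff's theorem through the \emph{maximal ergodic inequality}. Writing $S_nf=\sum_{k=0}^{n-1}f\comp T^k$ and $A_nf=\frac1n S_nf$, I would first reduce the statement to two claims about an arbitrary $f\in L^1(X,\mu)$: (i) $A_nf$ converges $\mu$-a.e.\ to a $T$-invariant function $\bar f$, and (ii) $\bar f\in L^1(X,\mu)$ with $\int_X\bar f\,d\mu=\int_X f\,d\mu$. Granting these, ergodicity forces $\bar f$ to be constant $\mu$-a.e.\ (an invariant integrable function has all of its superlevel sets invariant, hence of measure $0$ or $1$), and that constant must equal $\int_X\bar f\,d\mu=\int_X f\,d\mu$; applying (i)--(ii) to $f\comp T\in L^1(X,\mu)$ then shows that $\frac1n\sum_{k=1}^n f(T^kx)=\frac1n\sum_{k=0}^{n-1}(f\comp T)(T^kx)$ converges $\mu$-a.e.\ to $\int_X f\comp T\,d\mu=\int_X f\,d\mu$, which is exactly the assertion.

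The core of (i) is the maximal ergodic theorem: with $E=\{x:\sup_{n\ge1}S_nf(x)>0\}$ one has $\int_E f\,d\mu\ge0$. I would prove this by Garsia's argument. Set $M_N=\max\{0,S_1f,\dots,S_Nf\}\ge0$; for $1\le k\le N$ we have $S_kf=f+(S_{k-1}f)\comp T\le f+M_N\comp T$, so on $\{M_N>0\}$, where $M_N=\max_{1\le k\le N}S_kf$, one gets $f\ge M_N-M_N\comp T$. Integrating over $\{M_N>0\}$ and using $M_N\ge0$ off that set together with the invariance $\int_X M_N\comp T\,d\mu=\int_X M_N\,d\mu$ yields $\int_{\{M_N>0\}}f\,d\mu\ge\int_X M_N\,d\mu-\int_X M_N\comp T\,d\mu=0$; since $\{M_N>0\}\uparrow E$ and $|f|\mathbf{1}_{\{M_N>0\}}\le|f|\in L^1$, dominated convergence gives $\int_E f\,d\mu\ge0$. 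To pass to (i), I would fix rationals $\alpha<\beta$ and put $D_{\alpha,\beta}=\{\liminf_n A_nf<\alpha<\beta<\limsup_n A_nf\}$. A short computation (from $A_nf\comp T=\frac{n+1}{n}A_{n+1}f-\frac1n f$) shows $\limsup_n A_nf$ and $\liminf_n A_nf$ are $T$-invariant, so $D_{\alpha,\beta}$ is invariant and restricting the system to it is again measure-preserving. On $D_{\alpha,\beta}$ we have $\sup_n S_n(f-\beta)>0$ and $\sup_n S_n(\alpha-f)>0$ everywhere, so the maximal ergodic theorem applied within $D_{\alpha,\beta}$ gives $\int_{D_{\alpha,\beta}}(f-\beta)\,d\mu\ge0$ and $\int_{D_{\alpha,\beta}}(\alpha-f)\,d\mu\ge0$; adding them yields $(\alpha-\beta)\mu(D_{\alpha,\beta})\ge0$, hence $\mu(D_{\alpha,\beta})=0$. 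Since the divergence set of $A_nf$ is the countable union $\bigcup_{\alpha<\beta,\ \alpha,\beta\in\mathbb{Q}}D_{\alpha,\beta}$, it is null, and $\bar f:=\lim_n A_nf$ exists $\mu$-a.e.\ and is $T$-invariant.

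For (ii), Fatou's lemma applied to $|A_nf|$ together with $\int_X|A_nf|\,d\mu\le\int_X|f|\,d\mu$ gives $\bar f\in L^1$. For the equality of the integrals I would check that $\{A_nf\}_n$ is uniformly integrable: given $\epsilon>0$, pick a bounded $h$ with $\|f-h\|_1<\epsilon$; then $\|A_nf-A_nh\|_1<\epsilon$ for all $n$ while $\|A_nh\|_\infty\le\|h\|_\infty$, so $\int_B|A_nf|\,d\mu<\epsilon+\|h\|_\infty\,\mu(B)$ for every measurable $B$. Combined with the a.e.\ convergence and $\mu(X)=1$, Vitali's convergence theorem gives $A_nf\to\bar f$ in $L^1$, whence $\int_X\bar f\,d\mu=\lim_n\int_X A_nf\,d\mu=\int_X f\,d\mu$, the last step because $T$ preserves $\mu$. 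The step I expect to be the main obstacle is precisely this identification of the limit's integral: almost-everywhere convergence by itself does not deliver $\int_X\bar f\,d\mu=\int_X f\,d\mu$, and one must re-inject either the maximal inequality through truncations or, as here, a uniform-integrability estimate; everything else is routine bookkeeping built on Garsia's inequality.
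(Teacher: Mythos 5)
The paper does not actually prove this statement: Theorem \ref{erg1} is the classical Birkhoff ergodic theorem, quoted verbatim as a known tool with a pointer to the reference [Erg] (Cornfeld--Fomin--Sinai), so there is no in-paper argument to compare yours against. Your proposal is a complete and essentially correct standard proof via Garsia's maximal ergodic inequality: the inequality $f\geq M_N-M_N\circ T$ on $\{M_N>0\}$, the invariance $\int M_N\circ T\,d\mu=\int M_N\,d\mu$, the sets $D_{\alpha,\beta}$ over rational $\alpha<\beta$, the Fatou and Vitali (uniform integrability) steps, and the reduction of the sum $\frac1n\sum_{k=1}^n f\circ T^k$ to the Ces\`aro averages of $f\circ T$ are all sound; note also that surjectivity of $T$ is never used, only $\mu(T^{-1}A)=\mu(A)$ and ergodicity. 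Two small points deserve an explicit sentence. First, the ergodicity hypothesis as stated demands $T^{-1}A=A$ exactly, while the limit $\bar f$ is a priori only a.e.\ invariant; the clean fix, already implicit in your argument, is to fix an everywhere-finite representative of $f$ so that $A_nf\circ T=\frac{n+1}{n}A_{n+1}f-\frac1n f$ holds at every point, and then work with $\limsup_n A_nf$ (respectively $\liminf_n A_nf$), which is \emph{strictly} $T$-invariant; its superlevel sets then satisfy $T^{-1}A=A$ on the nose, so both the invariance of $D_{\alpha,\beta}$ and the constancy of the limit follow from the stated hypothesis without invoking an ``invariant mod null sets'' lemma. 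Second, killing the sets $D_{\alpha,\beta}$ only shows that $\lim_n A_nf$ exists in the extended sense $[-\infty,+\infty]$ a.e.; the possibility of an infinite limit on a set of positive measure is ruled out precisely by your Fatou estimate $\int|\bar f|\,d\mu\leq\int|f|\,d\mu$, so it is worth stating that this step is what guarantees finiteness, before the Vitali argument identifies $\int\bar f\,d\mu=\int f\,d\mu$.
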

Here by saying a map $T$ is ergodic we mean that $T^{-1}A=A$ implies $\mu(A)$ is $0$ or $1$ for all $A\in\mathfrak{F}$. For more information about this theorem, one can turn to \cite{Erg}.

The unit circle $\partial{D}$, along with the Lebesgue measure $d\theta/2\pi$, is a probability space. If $\varphi$ is a rotation, then $\varphi$ can be seen as a surjective map from $\partial\mathbb{D}$ onto itself. It is well known that the rotation $\varphi$ is ergodic on $\partial\mathbb{D}$ if and only if it is an irrational one.

Suppose $\psi\in H^\infty$ is not identically zero, then $\log|\psi|$ belongs to $L^1(\partial\mathbb{D},\frac{d\theta}{2\pi})$, see Theorem 2.7.1 in \cite{GTM237}. So by Ergodic Theorem, if $\varphi$ is an irrational rotation, then
$$\lim_{n\to\infty}\frac{1}{n}\sum_{k=1}^{n}\log |\psi(\varphi_k(\zeta))|=\frac{1}{2\pi}\int_0^{2\pi}\log |\psi(e^{i\theta})|d\theta$$
for almost every $\zeta\in\partial\mathbb{D}$. Or equivalently we have
$$\lim_{n\to\infty}|\psi_{(n)}(\zeta)|^{1/n}=\exp\left(\frac{1}{2\pi}\int_0^{2\pi}\log|\psi(e^{i\theta})|d\theta\right)$$
for almost every $\zeta\in\partial\mathbb{D}$. Therefore, if we write $v$ for the outer part of the function $\psi$, then the spectral radius of $C_{\psi,\varphi}$ is no less than
$$\exp\left(\frac{1}{2\pi}\int_0^{2\pi}\log|\psi(e^{i\theta})|d\theta\right)=|v(0)|.$$

For the general case, if $\varphi$ is an elliptic automorphism with fixed point $a\in D\backslash\{0\}$ of order $\infty$, then using (\ref{xs}) one can conclude that the spectral radius of  $C_{\psi,\varphi}$ is no less than $|v(a)|$, where $v$ is the outer part of $\psi$. Thus we have proved the following proposition.

\begin{proposition}\label{zc}
Suppose $\varphi$ is an elliptic automorphism of order $\infty$ and $\psi\in H^\infty$. Then the spectral radius of $C_{\psi,\varphi}$ on $A_\alpha^p$ is $\rho_{\psi,\varphi}$.

If we take $v$ to be the outer part of function $\psi$, and assume that $a\in D$ is the fixed point of $\varphi$, then $\rho_{\psi,\varphi}\geqslant|v(a)|$.
\end{proposition}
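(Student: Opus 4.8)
The plan is to reduce everything to the case of an irrational rotation via the conjugation (\ref{xs}) and then read off both assertions. For the first assertion I would start from the identity $C_{\psi,\varphi}^{n}=C_{\psi_{(n)},\varphi_{n}}=M_{\psi_{(n)}}C_{\varphi_{n}}$. When $\varphi$ is a rotation, $C_{\varphi_{n}}$ is unitary on $A_{\alpha}^{p}$, so $\|C_{\psi,\varphi}^{n}\|=\|M_{\psi_{(n)}}\|$; combining this with the standard fact that an analytic multiplication $M_{g}$ with $g\in H^{\infty}$ has norm exactly $\|g\|_{\infty}$ on $A_{\alpha}^{p}$ (obtained by testing on normalized atoms concentrating at the boundary), the spectral radius formula gives $\rho(C_{\psi,\varphi})=\lim_{n}\|\psi_{(n)}\|_{\infty}^{1/n}=\rho_{\psi,\varphi}$. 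For a general elliptic $\varphi$ of order $\infty$ with fixed point $a$, (\ref{xs}) shows $C_{\psi,\varphi}$ is similar to $C_{\psi\comp\varphi_{a},\tau}$ with $\tau=\varphi_{a}\comp\varphi\comp\varphi_{a}$ an irrational rotation; similarity preserves the spectral radius, and since $\tau_{j}=\varphi_{a}\comp\varphi_{j}\comp\varphi_{a}$ one checks $\prod_{j=0}^{n-1}(\psi\comp\varphi_{a})\comp\tau_{j}=\psi_{(n)}\comp\varphi_{a}$, whose sup-norm equals $\|\psi_{(n)}\|_{\infty}$ because $\varphi_{a}$ maps $D$ onto $D$. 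Hence $\rho_{\psi\comp\varphi_{a},\tau}=\rho_{\psi,\varphi}$ and the first assertion follows.

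For the lower bound I would again treat the rotation case first, assuming $\psi\not\equiv 0$ (the case $\psi\equiv 0$ being trivial). By Theorem 2.7.1 of \cite{GTM237} we have $\log|\psi|\in L^{1}(\partial D,d\theta/2\pi)$, and an irrational rotation is ergodic there, so Theorem \ref{erg1} applied to $f=\log|\psi|$ yields, for almost every $\zeta\in\partial D$,
$$\lim_{n\to\infty}\frac{1}{n}\log|\psi_{(n)}(\zeta)|=\frac{1}{2\pi}\int_{0}^{2\pi}\log|\psi(e^{i\theta})|\,d\theta=\log|v(0)|.$$
Here the passage from the Ces\`aro average $\tfrac1n\sum_{k=1}^{n}\log|\psi(\varphi_{k}\zeta)|$ in Theorem \ref{erg1} to $\tfrac1n\log|\psi_{(n)}(\zeta)|=\tfrac1n\sum_{k=0}^{n-1}\log|\psi(\varphi_{k}\zeta)|$ only alters the average by $\tfrac1n\bigl(\log|\psi(\zeta)|-\log|\psi(\varphi_{n}\zeta)|\bigr)$, which tends to $0$ a.e. (apply Theorem \ref{erg1} to $|\log|\psi||$), and the last equality is the mean value property of the harmonic function $\log|v|$ together with $|v|=|\psi|$ on $\partial D$. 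Fixing one such $\zeta$ and using $\|\psi_{(n)}\|_{\infty}\geqslant|\psi_{(n)}(\zeta)|$ for almost every $\zeta$, I conclude $\rho_{\psi,\varphi}=\lim_{n}\|\psi_{(n)}\|_{\infty}^{1/n}\geqslant|v(0)|$.

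To reach the general statement I would apply this bound to $C_{\psi\comp\varphi_{a},\tau}$. Composition with the involution $\varphi_{a}$ preserves inner functions (it preserves unimodularity of boundary values) and preserves outer functions (by conformal invariance of harmonic measure, $\log|v\comp\varphi_{a}|$ is the Poisson integral of its own boundary values whenever $\log|v|$ is), so the outer part of $\psi\comp\varphi_{a}$ is $v\comp\varphi_{a}$ up to a unimodular constant, and therefore $\rho_{\psi,\varphi}=\rho_{\psi\comp\varphi_{a},\tau}\geqslant|(v\comp\varphi_{a})(0)|=|v(\varphi_{a}(0))|=|v(a)|$. The only substantive ingredients in the whole argument are Birkhoff's Ergodic Theorem and the integrability $\log|\psi|\in L^{1}(\partial D)$ for $\psi\not\equiv0$; I expect the main (and still modest) difficulty to be the bookkeeping in the elliptic reduction --- verifying that conjugation by $C_{\varphi_{a}}$ turns $\psi_{(n)}$ into $\psi_{(n)}\comp\varphi_{a}$ and that the inner--outer factorization is respected by $\varphi_{a}$ --- together with the harmless index shift in the ergodic average.
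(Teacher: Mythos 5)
Your proposal is correct and follows essentially the same route as the paper: reduce to an irrational rotation via conjugation by $C_{\varphi_a}$, identify the spectral radius with $\rho_{\psi,\varphi}$ using the isometry of $C_{\varphi_n}$ and $\|M_g\|=\|g\|_\infty$, and obtain the lower bound $|v(a)|$ from Birkhoff's Ergodic Theorem applied to $\log|\psi|\in L^1(\partial D)$. The extra details you supply (the index shift in the ergodic average and the preservation of the inner--outer factorization under composition with $\varphi_a$) are points the paper leaves implicit, and they are handled correctly.
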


It has been shown in \cite{11} and \cite{my} that if $\psi\in A(D)$, then the spectral radius of $C_{\psi,\varphi}$ is exactly $|v(a)|$. However, one can never expect $\rho_{\psi,\varphi}=|v(a)|$ holds for all the cases, see the following Example \ref{ex1}.

Recall that $H^\infty$ is a \textit{logmodular subalgebra} of $L^\infty(\partial D)$, which means that the set
$$\{\log|f| : f\in (H^\infty)^{-1}\}$$
is dense in $L^\infty(\partial D)$. In fact, we have the following lemma, which is Theorem 4.5 in \cite{BAF}.

\begin{lemma}\label{log}
Every real-valued function $g(t)$ in $L^\infty(\partial D)$ has the form $\log |f(t)|$, where $f\in (H^\infty)^{-1}$.
\end{lemma}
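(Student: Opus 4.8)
The plan is to produce $f$ directly as an exponential of an analytic function whose real part is the Poisson extension of $g$. First I would let $u$ denote the harmonic extension of $g$ to $D$, that is
$$u(z)=\int_{\partial D}P_z(\text{e}^{i\theta})\,g(\text{e}^{i\theta})\,\frac{d\theta}{2\pi},$$
which makes sense because $g\in L^\infty(\partial D)\subset L^1(\partial D,d\theta/2\pi)$. Since $D$ is simply connected, $u$ admits a harmonic conjugate $v$ on $D$, unique once we normalize $v(0)=0$, so that $h=u+iv$ is analytic on $D$. Set $f=\text{e}^{h}$. Then $f\in H(D)$, $f$ is zero-free on $D$, and $|f(z)|=\text{e}^{u(z)}$ for every $z\in D$.

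Next I would check that $f\in(H^\infty)^{-1}$. Because $-\|g\|_\infty\leqslant g\leqslant\|g\|_\infty$ a.e. on $\partial D$ and the Poisson kernel $P_z$ is nonnegative with $\int_{\partial D}P_z\,d\theta/2\pi=1$, the averaging property of the Poisson integral gives $-\|g\|_\infty\leqslant u(z)\leqslant\|g\|_\infty$ for all $z\in D$. Hence $\text{e}^{-\|g\|_\infty}\leqslant|f(z)|\leqslant\text{e}^{\|g\|_\infty}$ on $D$, so both $f=\text{e}^{h}$ and $1/f=\text{e}^{-h}$ are bounded analytic functions on $D$; that is, $f\in(H^\infty)^{-1}$.

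Finally I would identify the boundary modulus of $f$. By Fatou's theorem the Poisson integral $u$ has nontangential boundary limit equal to $g(t)$ for a.e. $t\in\partial D$, and, being in $H^\infty$, $f$ has nontangential boundary limits a.e. on $\partial D$. Passing to the limit in $|f(z)|=\text{e}^{u(z)}$ along a nontangential approach gives $|f(t)|=\text{e}^{g(t)}$, i.e. $\log|f(t)|=g(t)$, for a.e. $t\in\partial D$, which is the assertion.

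The one point that needs care — and the only place the argument could look suspicious — is that the harmonic conjugate $v=\tilde g$ of a merely bounded function need not itself be bounded (it lies in $\mathrm{BMO}$ and in every $L^p$ with $p<\infty$, but not in general in $L^\infty$). This is harmless here: the modulus $|f|=\text{e}^{u}$ depends only on $u$, not on $v$, so the unboundedness of $v$ never enters the estimates. All the real content is therefore the elementary two-sided Poisson bound on $u$ together with Fatou's theorem for the boundary behaviour, and no genuine obstacle remains.
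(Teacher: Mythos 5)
Your proof is correct and is essentially the canonical argument: the paper itself gives no proof of this lemma, quoting it as Theorem 4.5 of Garnett's book, and the proof there is exactly your construction $f=\exp(u+i\tilde u)$ with $u$ the Poisson extension of $g$, the two-sided bound $\mathrm{e}^{-\|g\|_\infty}\leqslant|f(z)|\leqslant \mathrm{e}^{\|g\|_\infty}$ giving $f\in(H^\infty)^{-1}$, and Fatou's theorem identifying $|f|=\mathrm{e}^{g}$ a.e.\ on $\partial D$. Your closing observation that the conjugate function is in general only in $\mathrm{BMO}$, but that this is irrelevant because only $|f|=\mathrm{e}^{u}$ enters the estimates, is also accurate, so nothing needs to be added.
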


\begin{remark}\label{logre}
Suppose $g(t)$ is a real-valued function in $L^1(\partial D)$ that is bounded above, then a similar discussion shows that $g(t)$ is of the form $\log |f(t)|$ for some $f\in H^\infty$. See page 53 of \cite{Hoffman2}.
\end{remark}

\begin{example}\label{ex1}
According to Rudin \cite{Rudin}, a Borel set $U$ in $\partial D$ is called \textit{permanently positive} if
$$\frac{d\theta}{2\pi}(\bigcap_{j=1}^n\zeta_jU)>0$$
for any $n\in\mathbb{N}$ and any choice of $\{\zeta_j\}_{j=1}^{n}$ on $\partial D$. Now suppose $U$ is a permanently positive set with $\frac{d\theta}{2\pi}(U)<1$, and $\chi_U\subset L^\infty(\partial D)$ is the characteristic function for $U$. Define $\widehat{U}=\{m\in\mathfrak{M}_{L^\infty} : \chi_U(m)=1\}$, then $\widehat{U}$ is a clopen subset of $\mathfrak{M}_{L^\infty}$. For any finite set $\{\zeta_1 , \zeta_2 , ... , \zeta_n\}$ in $\partial D$, $\bigcap_{j=1}^n\zeta_jU$ has positive Lebesgue measure, so the set
$$\bigcap_{j=1}^n\zeta_j\widehat{U}=\widehat{\bigcap_{j=1}^n\zeta_jU}$$
is non-void. Note that $\zeta\widehat{U}$ is closed in $\mathfrak{M}_{L^\infty}$ for each $\zeta\in\partial D$, thus by the finite intersection property we can conclude that
$$\bigcap_{\zeta\in\partial D}\zeta \widehat{U}\ne\emptyset.$$
Take $m\in\bigcap_{\zeta\in\partial D}\zeta \widehat{U}$, which means that $\zeta m\in\widehat{U}$ for all $\zeta\in\partial D$. Now if we take a function $\psi\in (H^\infty)^{-1}$ such that $\log|\psi|$ equals to $\chi_G$ in $L^\infty(\partial D)$, then $|\psi(\zeta m)|=\text{e}$ for all $\zeta\in\partial D$. So for any irrational rotation $\varphi(z)=\eta z$, we have $$\rho_{\psi,\varphi}\geqslant\lim_{n\to\infty}\prod_{j=0}^{n-1}|\psi(\eta^jm)|^{1/n}=\text{e}.$$
On the other hand, we know $\rho_{\psi,\varphi}\leqslant||\psi||_\infty=\text{e}.$
Therefore the spectral radius of $C_{\psi,\varphi}$ on $A_\alpha^p$ is $\rho_{\psi,\varphi}=\text{e}$, which is large than $|\psi(0)|=\text{e}^{\frac{d\theta}{2\pi}(U)}.$
\end{example}

In \cite{Russion} Kitover proves that for any irrational rotation $\varphi$ and $\psi\in H^\infty$,
$$\rho_{\psi,\varphi}=\max_{\mu\in M_\varphi}\exp\left(\int_{\mathfrak{M}_{L^\infty}}\log|\psi|d\mu\right),$$
where $M_\varphi$ is the set of all $\varphi$-invariant regular probability Borel measures on $\mathfrak{M}_{L^\infty}$. However, this is not a explicit description of $\rho_{\psi,\varphi}$. In some special cases, $\rho_{\psi,\varphi}$ can have a more clear expression. The next lemma is Theorem 4.2 in \cite{19}.

\begin{lemma}\label{ty}
Suppose $\varphi$ is an irrational rotation and $f$ is an upper semi-continuous function on $\partial\mathbb{D}$, then
$$\lim_{n\to\infty}\sup_{\zeta\in\partial D}\left(\frac{1}{n}\sum_{j=0}^{n-1}f\comp\varphi_j(\zeta)\right)=\frac{1}{2\pi}\int_{0}^{2\pi}f(\text{e}^{i\theta})d\theta.$$
\end{lemma}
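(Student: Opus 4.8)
The plan is to prove the identity by establishing the two inequalities
$$\limsup_{n\to\infty}\sup_{\zeta\in\partial D}S_nf(\zeta)\le\frac{1}{2\pi}\int_0^{2\pi}f(\text{e}^{i\theta})d\theta\le\liminf_{n\to\infty}\sup_{\zeta\in\partial D}S_nf(\zeta),$$
where throughout I abbreviate $S_nf(\zeta)=\frac{1}{n}\sum_{j=0}^{n-1}f\comp\varphi_j(\zeta)$; once both hold, the limit on the left exists and equals the integral on the right. Since $\partial D$ is compact and $f$ is upper semi-continuous, $f$ is bounded above. If $\int_0^{2\pi}f\,d\theta=-\infty$, the upper estimate proved below (which uses no integrability of $f$) already forces the left-hand side of the claimed identity to be $-\infty$, so I may henceforth assume $f\in L^1(\partial D,d\theta/2\pi)$ and write $\bar f$ for its average.

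For the lower bound I would apply the Birkhoff Ergodic Theorem (Theorem \ref{erg1}). An irrational rotation is a measure-preserving bijection of $(\partial D,d\theta/2\pi)$ and is ergodic, so applying the theorem to $f\comp\varphi_{-1}\in L^1$ and reindexing the sum gives $S_nf(\zeta)\to\bar f$ for almost every $\zeta\in\partial D$. Fixing any one such point $\zeta_0$ yields $\sup_{\zeta\in\partial D}S_nf(\zeta)\ge S_nf(\zeta_0)\to\bar f$, hence $\liminf_n\sup_{\zeta\in\partial D}S_nf(\zeta)\ge\bar f$.

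The upper bound is the substantive part. Here I would use that an upper semi-continuous function on a compact metric space is the pointwise limit of a decreasing sequence of continuous functions; explicitly, the Lipschitz functions $g_k(x)=\sup_{y\in\partial D}\{f(y)-k|x-y|\}$ satisfy $g_k\downarrow f$. For each fixed $k$, the classical equidistribution theorem for an irrational rotation — verified first on the characters $\text{e}^{im\theta}$ by summing a geometric series, then extended by density of trigonometric polynomials — shows that $S_ng_k(\zeta)\to\frac{1}{2\pi}\int_0^{2\pi}g_k\,d\theta$ \emph{uniformly} in $\zeta\in\partial D$. Since $S_nf\le S_ng_k$ pointwise, this gives $\limsup_n\sup_{\zeta\in\partial D}S_nf(\zeta)\le\frac{1}{2\pi}\int_0^{2\pi}g_k\,d\theta$ for every $k$, and letting $k\to\infty$, the monotone convergence theorem applied to $g_1-g_k\uparrow g_1-f\ge0$ gives $\frac{1}{2\pi}\int_0^{2\pi}g_k\,d\theta\to\bar f$. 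This is the left inequality, and the proof is complete.

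The main obstacle is the upper estimate: Birkhoff's theorem controls $S_nf(\zeta)$ only for almost every $\zeta$, whereas the conclusion requires control uniform in $\zeta$, and $f$ is too irregular to be fed into an equidistribution argument directly. The resolution is to squeeze $f$ from above by continuous functions, for which the irrational rotation is uniquely ergodic so that the Ces\`aro averages converge uniformly, and then to recover $\bar f$ through a monotone limit; the upper semi-continuity of $f$ is exactly the hypothesis that makes such a decreasing family of continuous majorants available.
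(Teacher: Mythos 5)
Your proposal is correct, and it is a complete, self-contained argument; note that the paper itself does not prove this lemma at all, but simply quotes it as Theorem 4.2 of the Kitover--Orhon paper \cite{19}, so there is no in-paper proof to compare against. Your route --- squeezing the upper semi-continuous $f$ from above by the Lipschitz envelopes $g_k(x)=\sup_y\{f(y)-k|x-y|\}$, using unique ergodicity of the irrational rotation (via Weyl's equidistribution argument on characters plus density of trigonometric polynomials) to get \emph{uniform} convergence of the Ces\`aro averages of each $g_k$, and then passing to the limit in $k$ by monotone convergence --- is the standard proof of this type of statement and almost certainly parallels the cited source; the careful handling of the case $\int f\,d\theta=-\infty$ and the reindexing through $f\circ\varphi_{-1}$ to match the summation convention of Theorem \ref{erg1} are both correct. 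One small simplification worth noting: the lower bound does not need Birkhoff's theorem at all, since by rotation invariance of Lebesgue measure the circle average of $\zeta\mapsto\frac{1}{n}\sum_{j=0}^{n-1}f\circ\varphi_j(\zeta)$ equals $\frac{1}{2\pi}\int_0^{2\pi}f(\mathrm{e}^{i\theta})\,d\theta$ for every $n$, so the supremum over $\zeta$ is at least this value for every $n$; this avoids the $L^1$ hypothesis and the almost-everywhere selection of a good point $\zeta_0$ entirely.
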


Let $v\in H^\infty$ be an outer function.  Then according to Remark \ref{logre}, we can define $v^*$ to be the outer function such that
$$|v^*(\text{e}^{i\theta})|=\limsup_{z\to\text{e}^{i\theta}}|v(z)|$$
a.e. on $\partial D$. Then $v^*$ is unique up to multiplication by a constant of modulus one. Similarly, if $g(\text{e}^{i\theta})=\liminf_{z\to\text{e}^{i\theta}}\log|v(z)|$ is in $L^\infty(\partial D)$, we can define $v_*$ to be the outer function such that
$$|v_*(\text{e}^{i\theta})|=\liminf_{z\to\text{e}^{i\theta}}|v(z)|$$
a.e. on $\partial D$. Otherwise, we shall just set $v_*=0$. Denote $v_{**}=((v_*)^*)_*$. Now we will improve Proposition \ref{zc} as follows.

\begin{proposition}\label{rad}
Suppose $\varphi$ is an elliptic automorphism of order $\infty$ with fixed point $a\in D$ and $\psi\in H^\infty$. Let $v$ be the outer part of $\psi$. Then
$$\max\left\{|v(a)| , |v_{**}(a)|\right\}\leqslant\rho_{\psi,\varphi}\leqslant|v^*(a)|.$$
\end{proposition}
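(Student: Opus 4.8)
As in Section 4, conjugating by $C_{\varphi_a}$ reduces everything to the case $a=0$ and $\varphi(z)=\eta z$ an irrational rotation, so it suffices to prove
$$\max\{|v(0)|,|v_{**}(0)|\}\leqslant\rho_{\psi,\varphi}\leqslant|v^*(0)|.$$
The inequality $\rho_{\psi,\varphi}\geqslant|v(0)|$ is already Proposition \ref{zc}, so two things remain: the upper bound $\rho_{\psi,\varphi}\leqslant|v^*(0)|$ and the lower bound $\rho_{\psi,\varphi}\geqslant|v_{**}(0)|$.

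For the upper bound, recall $\rho_{\psi,\varphi}=\lim_n\|\psi_{(n)}\|_\infty^{1/n}$ and $\|\psi_{(n)}\|_\infty=\||v|_{(n)}\|_\infty$ where $|v|_{(n)}=\prod_{j=0}^{n-1}|v\circ\varphi_j|$, since $|\psi(m)|=|v(m)|$ on $\mathfrak{M}_{L^\infty}$ and the sup-norm is computed over the Shilov boundary. The plan is to estimate $|v(z)|$ near the boundary by the upper-semicontinuous regularization $f(\mathrm{e}^{i\theta})=\limsup_{z\to\mathrm{e}^{i\theta}}\log|v(z)|$, which by definition satisfies $|v^*(\mathrm{e}^{i\theta})|=\exp f(\mathrm{e}^{i\theta})$ a.e., and then apply Lemma \ref{ty} to the upper-semicontinuous function $f$ to get
$$\lim_{n\to\infty}\sup_{\zeta\in\partial D}\frac{1}{n}\sum_{j=0}^{n-1}f(\varphi_j(\zeta))=\frac{1}{2\pi}\int_0^{2\pi}f(\mathrm{e}^{i\theta})\,d\theta=\log|v^*(0)|,$$
the last equality because $v^*$ is outer. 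The delicate point is passing from the sup over $\partial D$ of Birkhoff averages of $\log|v|$ to the sup over $\overline D$ (or over $\mathfrak{M}_{L^\infty}$) of $\frac1n\log|v|_{(n)}$; one controls $\|\,|v|_{(n)}\,\|_\infty$ by choosing points $z_n$ in $\overline D$ nearly attaining the supremum, pushing them to the boundary, and dominating $\log|v\circ\varphi_j|$ at a boundary point by $f$ composed with a suitable rotation, with an error that is $o(n)$ uniformly — this uniform control is where the $\limsup$ definition of $v^*$ and the compactness underlying Lemma \ref{ty} do the work.

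For the lower bound $\rho_{\psi,\varphi}\geqslant|v_{**}(0)|$, the idea is to iterate the construction used for $|v(0)|$: applying Proposition \ref{zc} (or rather its proof via the Birkhoff Ergodic Theorem on $\partial D$) not to $v$ but to a well-chosen outer function whose boundary modulus dominates $\liminf$-type lower envelopes of $|v|$ along orbits. Concretely, $v_*$ captures $\liminf_{z\to\mathrm{e}^{i\theta}}|v(z)|$, then $(v_*)^*$ its $\limsup$-regularization, then $v_{**}=((v_*)^*)_*$; since taking outer functions preserves the relation $|v_{**}(0)|=\exp(\frac1{2\pi}\int\log|v_{**}|\,d\theta)$, it suffices to show that for a.e.\ $\zeta\in\partial D$ one has $\liminf_{n}\frac1n\sum_{j=0}^{n-1}\log|v(\varphi_j(\zeta))|\geqslant \log|v_{**}(0)|$, or to produce points in $\overline D$ along which the geometric means of $|v|$ stay above $|v_{**}(0)|-\varepsilon$. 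One runs the Ergodic Theorem for the lower envelopes and uses upper semicontinuity to transfer the a.e.\ boundary estimate to an estimate valid along sequences approaching the boundary, exactly mirroring the $v^*$ argument but from below.

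The main obstacle is the upper bound: Lemma \ref{ty} is stated for functions on $\partial D$, whereas $\rho_{\psi,\varphi}$ involves the sup of $|\psi_{(n)}|$ over all of $\mathfrak{M}_{L^\infty}$ (equivalently a $\limsup$ of $|v|_{(n)}(z)$ as $z\to\partial D$ along every possible direction and speed). Bridging these requires showing that for any $\varepsilon>0$ and all large $n$, $\sup_{z\in D}\frac1n\log|v|_{(n)}(z)\leqslant\sup_{\zeta\in\partial D}\frac1n\sum_{j=0}^{n-1}f(\varphi_j(\zeta))+\varepsilon$; the subtlety is that the orbit $\{\varphi_j(z)\}$ of an interior point $z$ can come close to the boundary at every coordinate only on a sparse set of indices, so one must carefully split the indices $j$ into those for which $\varphi_j(z)$ is within $\delta$ of $\partial D$ (handled by $f$ plus a uniform modulus-of-continuity error coming from upper semicontinuity) and the bounded-in-number remainder (absorbed into the $o(n)$ term). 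I expect this accounting, together with a compactness argument to make the error uniform in $z$, to be the technical heart of the proof.
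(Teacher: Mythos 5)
Your reduction to an irrational rotation and your treatment of the upper bound via Lemma \ref{ty} are in the spirit of the paper, but the proposal has a genuine gap at the lower bound $\rho_{\psi,\varphi}\geqslant|v_{**}(0)|$. The criterion you reduce to --- that for a.e.\ $\zeta\in\partial D$ one has $\liminf_{n}\frac1n\sum_{j=0}^{n-1}\log|v(\varphi_j(\zeta))|\geqslant\log|v_{**}(0)|$ --- is false precisely in the cases where the bound has content: since $\log|v|\in L^1(\partial D)$ and an irrational rotation is ergodic, Birkhoff's theorem forces this average to converge to $\log|v(0)|$ for a.e.\ $\zeta$, and $|v(0)|$ can be strictly smaller than $|v_{**}(0)|$ (e.g.\ the paper's example with $\log|\psi|=\chi_G$, $G$ open and dense with measure less than one, where $|v(0)|=\mathrm{e}^{\frac{d\theta}{2\pi}(G)}<\mathrm{e}=|v_{**}(0)|$). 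So no almost-everywhere statement, and no ``mirroring the $v^*$ argument from below'', can succeed: an upper bound only requires dominating all points, whereas here one must exhibit particular points whose entire forward orbit is simultaneously good, and typical boundary points are not such. The paper supplies exactly this missing mechanism: for a.e.\ $\zeta$ the set $E_\zeta$ of points of an arc $I_\zeta$ centered at $\zeta$ where $\log|v_*|>\log|v_{**}(\zeta)|-\epsilon$ is (after a null modification, using lower semicontinuity of $|v_*|$) open and dense in $I_\zeta$, so any finite intersection of rotated copies $\overline{\zeta_j}E_{\zeta_j}$ has positive measure; passing to the closed sets $\overline{\zeta}\widehat{E}_\zeta$ in $\mathfrak{M}_{L^\infty}$ and using compactness via the finite intersection property (as in Example \ref{ex1}) produces a single point $m_0\in\mathfrak{M}_{L^\infty}$ with $\log|v(\mathrm{e}^{i\theta}m_0)|\geqslant\log|v_{**}(\mathrm{e}^{i\theta})|-\epsilon$ for \emph{every} $\theta$; evaluating $\psi_{(n)}$ along the orbit of $m_0$ (where $|\psi|=|v|$) and averaging then gives $\rho_{\psi,\varphi}\geqslant|v_{**}(0)|\mathrm{e}^{-\epsilon}$. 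Nothing in your sketch constructs such a point, and this construction is the real content of the inequality.

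Separately, what you single out as the ``technical heart'' of the upper bound is not an obstacle, and the bridge you sketch would not work as described: under a rotation the orbit of an interior point has constant modulus, so there is no splitting of indices into those near the boundary and a bounded remainder. The correct (and the paper's) bridge is simply that the $H^\infty$ norm is attained on the Shilov boundary: a.e.\ on $\partial D$ the boundary modulus of $v$ is dominated by its $\limsup$ regularization, so $|\psi(m)|=|v(m)|\leqslant|v^*(m)|$ for all $m\in\mathfrak{M}_{L^\infty}$, hence $\|\psi_{(n)}\|_\infty\leqslant\sup_{\zeta\in\partial D}\exp\bigl(\sum_{j=0}^{n-1}f(\varphi_j(\zeta))\bigr)$ with $f$ the upper semi-continuous representative of $\log|v^*|$; that is, $\rho_{\psi,\varphi}\leqslant\rho_{v^*,\varphi}$, and Lemma \ref{ty} immediately gives $\rho_{v^*,\varphi}=|v^*(0)|$. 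No interior-point accounting, $o(n)$ error terms, or uniform modulus-of-continuity argument is needed.
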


\begin{proof}
We will just proof the situation where $a=0$, or equivalently, $\varphi$ is an irrational rotation. Then the general case follows directly from (\ref{xs}).

By the definition of $v^*$, $\log |v^*|$ equals to an upper semi-continuous function a.e. on $\partial D$. Moreover, since the inner part of $\psi$ take a value of modulus one at each point in $\mathfrak{M}_{L^\infty}$, we have $|\psi(m)|\leqslant|v^*(m)|$ for all $m\in\mathfrak{M}_{L^\infty}$. Therefore by Lemma \ref{ty} we have
\begin{align*}
\rho_{\psi,\varphi}\leqslant\rho_{v*,\varphi}&=\exp\left(\frac{1}{2\pi}\int_{0}^{2\pi}\log|v^*(\text{e}^{i\theta})|d\theta\right)=|v^*(0)|.
\end{align*}

Now let us proof that $|v_{**}(0)|\leqslant\rho_{\psi,\varphi}$. Fix any $\epsilon>0$. By the definition of $v_*$, for almost every $\zeta\in\partial D$ there exist an open arc $I_\zeta$ center at such that $$\log|(v_*)^*(\zeta')|>\log|v_{**}(\zeta)|-\epsilon\quad,\quad \forall\zeta'\in I_\zeta.$$
As a consequence, the set
$$E_\zeta=\{\zeta'\in I_\zeta : \log|v_*(\zeta')|>\log|v_{**}(\zeta)|-\epsilon\}$$
is dense in $I_\zeta$. Moreover, since $|v_*|$ equals to a lower semi-continuous function on $\partial D$, we may assume $E_\zeta$ to be open after modifying the values of $v_*$ on a set of Lebesgue measure zero. So for any finite set of points $\{\zeta_j\}_{j=1}^n$ on $\partial D$, we have
\begin{align}\label{now}
\frac{d\theta}{2\pi}\left(\bigcap_{j=1}^n\overline{\zeta}E_\zeta\right)>0.
\end{align}
Let
$$\widehat{E}_\zeta=\{m\in\mathfrak{M}_{L^\infty} : \log|v_*(m)|\geqslant\log|v_{**}(\zeta)|-\epsilon\}.$$
Then (\ref{now}) implies that $\bigcap_{j=1}^n\overline{\zeta}\widehat{E}_\zeta$ is not empty. Since each $\widehat{E}_\zeta$ is closed in $\mathfrak{M}_{L^\infty}$, by the finite intersection property we can know that
$$\bigcap_{\zeta\in\partial D}\overline{\zeta}\widehat{E}_\zeta\ne\emptyset.$$
Take $m_0\in\bigcap_{\zeta\in\partial D}\overline{\zeta}\widehat{E}_\zeta$, then for all $\theta\in[0,2\pi)$ we have
$$\log|v(\text{e}^{i\theta}m_0)|\geqslant\log|v_*(\text{e}^{i\theta}m_0)|\geqslant\log|v_{**}(\text{e}^{i\theta})|-\epsilon.$$
Hence by Ergodic Theorem, we have
\begin{align*}
\rho_{\psi,\varphi}&\geqslant\exp\left(\frac{1}{2\pi}\int_0^{2\pi}\log|v(\text{e}^{i\theta}m_0)|\right)\\
&\geqslant\exp\left(\frac{1}{2\pi}\int_0^{2\pi}\log|v_{**}(\text{e}^{i\theta})|-\epsilon\right)\\
&=|v_{**}(0)|\cdot\text{e}^{-\epsilon}
\end{align*}
Letting $\epsilon\to 0$, we have $\rho_{\psi,\varphi}\geqslant|v_{**}(0)|$.
\end{proof}

\begin{example}
Let $G$ be an open dense subset of $\partial D$ with $\frac{d\theta}{2\pi}(G)<1$. And $\chi_G\subset L^\infty(\partial D)$ is the characteristic function for $G$. We now take a function $\psi\in (H^\infty)^{-1}$ such that $\log|\psi|$ equals to $\chi_G$ in $L^\infty(\partial D)$, and consider the spectrum of $C_{\psi,\varphi}$, where $\varphi$ is an irrational rotation, on $A_\alpha^p$.

Firstly, note that the open dense $G$ is apparently permanently positive. So Example \ref{ex1} shows directly that $\rho_{\psi,\varphi}=\text{e}$.

But here let's discuss this example by using Proposition \ref{rad} other than Example \ref{ex1}. Since $\psi$ is outer in this case, it is easy to see that $\psi^*=\psi_{**}=\text{e}$. So by Proposition \ref{rad}, the spectral radius of $C_{\psi,\varphi}$ is $\rho_{\psi,\varphi}=\text{e}$. On the other hand, a simple calculation shows that $C_{\psi,\varphi}^{-1}=C_{\tilde{\psi},\varphi_{-1}}$ where $\tilde{\psi}=\frac{1}{\psi\comp\varphi_{-1}}$. Since $G$ is open, one has $\tilde{\psi}^*=\tilde{\psi}$. So the spectral radius of $C_{\psi,\varphi}^{-1}$ is $\rho_{\tilde{\psi},\varphi}=\frac{1}{\psi(0)}=\text{e}^{-\frac{d\theta}{2\pi}(G)}$.

According to the discussion in Section 4, one in fact has
$$\sigma(C_{\psi,\varphi})=\sigma_e(C_{\psi,\varphi})=\{\lambda\in\mathbb{C} : \text{e}^{\frac{d\theta}{2\pi}(G)}\leqslant|\lambda|\leqslant\text{e}\}.$$
This example can also be found in \cite{19}, but it can actually be derived from the proof of Theorem 1.1 in \cite{Orb}.
\end{example}

\begin{remark}\label{mu}
Another useful model for Ergodic Theorem involves in our paper is $\mathfrak{M}_{L^\infty}$ with measure $\mu_0$. It is easy to see that each irrational rotation $\varphi$ is ergodic on $\mathfrak{M}_{L^\infty}$ with respect to the measure $\mu_0$, see \cite{Orb}. So by Ergodic Theorem, for any $\psi\in H^\infty$, the following equation
$$\lim_{n\to\infty}|\psi_{(n)}(m)|^{1/n}=\exp\left(\int_{\mathfrak{M}_{L^\infty}}\log|\psi|d\mu_0\right)=|v(0)|$$
holds for $\mu_0$-almost every $m\in\mathfrak{M}_{L^\infty}.$ Here $v$ is the outer part of $\psi$.
\end{remark}

\section{Spectra of Invertible Operators}

In this section, we will give a complete description of the spectrum and essential spectrum of invertible operator $C_{\psi, \varphi}$ when $\varphi$ is an elliptic automorphism of order $\infty$. Note that when $\varphi$ is an elliptic automorphism, $C_{\psi, \varphi}$ is invertible if and only if $\psi\in (H^\infty)^{-1}$, i.e., $\psi$ has no zero in $\mathfrak{M}_{\infty}$.

The proof of next lemma is the same as the proof of Proposition 7.11 in \cite{19}.
\begin{lemma}\label{ri}
Suppose $\varphi$ is an irrational rotation and $\psi\in H^\infty$. Let $\sigma_e(C_{\psi, \varphi})$ be the essential spectrum of $C_{\psi,\varphi}$ on $A_\alpha^p$. If $\lambda\in\sigma_e(C_{\psi, \varphi})$, then for any $\theta\in\mathbb{R}$ we have $\mathrm{e}^{i\theta}\lambda\in\sigma_e(C_{\psi, \varphi})$.
\end{lemma}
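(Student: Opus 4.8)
The plan is to pass to the Calkin algebra and use a commutation relation between $C_{\psi,\varphi}$ and the coordinate multiplication operator $M_z$, together with the density in $\partial D$ of the rotation orbit. Write $\varphi(z)=\eta z$ with $\eta=\mathrm{e}^{i\beta}$, where $\beta/2\pi$ is irrational.

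The starting point is the one-line identity $C_{\psi,\varphi}M_z=\eta\,M_zC_{\psi,\varphi}$ on $A_\alpha^p$; indeed $C_{\psi,\varphi}(zf)(z)=\psi(z)\,\eta z\,f(\eta z)=\eta z\cdot(C_{\psi,\varphi}f)(z)$. The crucial observation is that $M_z$ is Fredholm on $A_\alpha^p$: it is injective, and its range is the kernel of the bounded evaluation functional $g\mapsto g(0)$, hence a closed subspace of codimension one — given $g\in A_\alpha^p$ with $g(0)=0$, the function $g/z$ is analytic and again lies in $A_\alpha^p$, by an easy estimate separating a neighbourhood of $0$ from the annulus where $|z|$ is bounded away from $0$. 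By Atkinson's theorem, the image $\pi(M_z)$ of $M_z$ in the Calkin algebra $\mathcal{Q}=B(A_\alpha^p)/K(A_\alpha^p)$ is invertible, whereas $\sigma_e(C_{\psi,\varphi})$ is precisely the spectrum of $\pi(C_{\psi,\varphi})$ in $\mathcal{Q}$.

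Transferring the identity to $\mathcal{Q}$ gives $\pi(M_z)^{-1}\pi(C_{\psi,\varphi})\pi(M_z)=\eta\,\pi(C_{\psi,\varphi})$, so $\pi(C_{\psi,\varphi})$ and $\eta\,\pi(C_{\psi,\varphi})$ are similar in $\mathcal{Q}$ and have the same spectrum there; that is, $\sigma_e(C_{\psi,\varphi})=\eta\,\sigma_e(C_{\psi,\varphi})$, and hence $\sigma_e(C_{\psi,\varphi})=\eta^n\sigma_e(C_{\psi,\varphi})$ for every $n\in\mathbb{Z}$. Since $\{\eta^n:n\in\mathbb{Z}\}$ is dense in $\partial D$ (irrationality of $\beta/2\pi$) and $\sigma_e(C_{\psi,\varphi})$ is closed, given $\lambda\in\sigma_e(C_{\psi,\varphi})$ and $\theta\in\mathbb{R}$ one chooses $n_k$ with $\eta^{n_k}\to\mathrm{e}^{i\theta}$, and then $\eta^{n_k}\lambda\in\sigma_e(C_{\psi,\varphi})$ with $\eta^{n_k}\lambda\to\mathrm{e}^{i\theta}\lambda$, so $\mathrm{e}^{i\theta}\lambda\in\sigma_e(C_{\psi,\varphi})$.

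The argument is short once $M_z$ is brought in, so the main (and only mildly delicate) point is verifying that $M_z$ is Fredholm on $A_\alpha^p$ — equivalently that $\pi(M_z)$ is invertible in $\mathcal{Q}$ — and then legitimately transferring the commutation relation to the Calkin algebra. If one prefers to avoid $\mathcal{Q}$, the same conclusion follows by applying directly the fact that if a product $ST$ is Fredholm and one of the two factors is Fredholm then so is the other, to the identities $(C_{\psi,\varphi}-\eta\mu)M_z=\eta\,M_z(C_{\psi,\varphi}-\mu)$ and $M_z(C_{\psi,\varphi}-\eta^{-1}\mu)=\eta^{-1}(C_{\psi,\varphi}-\mu)M_z$.
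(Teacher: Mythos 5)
Your argument is correct. Note that the paper does not write out a proof of this lemma at all --- it only says the proof is the same as that of Proposition 7.11 in the cited Kitover--Orhon paper --- so there is nothing to match line by line; your Calkin-algebra argument is a clean, self-contained substitute, and its engine (intertwining $C_{\psi,\varphi}$ with multiplication by the coordinate function, $C_{\psi,\varphi}M_z=\eta\,M_zC_{\psi,\varphi}$) is exactly the mechanism behind the cited result. All the steps you need do check out: $M_z$ is injective with closed range of codimension one, hence Fredholm, so $\pi(M_z)$ is invertible in the Calkin algebra; by Atkinson's theorem $\sigma_e(C_{\psi,\varphi})$ is the spectrum of $\pi(C_{\psi,\varphi})$ there; similarity invariance of the spectrum gives $\sigma_e(C_{\psi,\varphi})=\eta\,\sigma_e(C_{\psi,\varphi})$; and density of $\{\eta^n : n\in\mathbb{Z}\}$ in $\partial D$ together with closedness of the essential spectrum finishes the proof. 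It is also worth observing that your proof explains why the statement is naturally about the \emph{essential} spectrum: $M_z$ is Fredholm but not invertible on $A_\alpha^p$, so the conjugation is only available modulo the compacts. Your alternative phrasing avoiding the Calkin algebra, via the identities $(C_{\psi,\varphi}-\eta\mu)M_z=\eta\,M_z(C_{\psi,\varphi}-\mu)$ and the fact that if a product of two operators is Fredholm and one factor is Fredholm then so is the other, is equally valid.
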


The next lemma is crucial for our discussion in this section. The proof is based on Remark \ref{mu}.

\begin{lemma}\label{ts1}
Suppose $\varphi$ is an irrational rotation and $\psi\in H^\infty$. Let $v$ be the outer part of $\psi$. If $\rho_{\psi, \varphi}=\lim_{n\to\infty}||\psi_{(n)}||_\infty^{1/n}$ is greater than $|v(0)|$, then for any $\epsilon>0$ and $n\in\mathbb{N}$, there exist $m_0\in\mathfrak{M}_{L^\infty}$ and $n'>n$ such that
$$|\psi_{(n)}(m_0)|^{\frac{1}{n}}>\rho_{\psi, \varphi}-\epsilon$$
and
$$|\psi_{(n')}\comp\varphi_{-n'}(m_0)|^{\frac{1}{n'}}<|v(0)|+\epsilon.$$

\end{lemma}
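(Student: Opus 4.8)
The plan is to exploit the two ergodic models for the irrational rotation $\varphi$: its action on the circle $\partial D$ with Lebesgue measure (via Lemma~\ref{ty}) and its action on $\mathfrak{M}_{L^\infty}$ with the representing measure $\mu_0$ (via Remark~\ref{mu}). The hypothesis $\rho_{\psi,\varphi}>|v(0)|$ says precisely that the supremum over $m\in\mathfrak{M}_{L^\infty}$ of the ``ergodic averages'' of $\log|\psi|$ strictly exceeds the $\mu_0$-almost-everywhere limit of those same averages. So there must be a ``bad'' point $m_0$ whose forward orbit averages are anomalously large, while $\mu_0$-almost every point — in particular, points arbitrarily close to $m_0$ in an appropriate sense, or the point $\varphi_{-n'}(m_0)$ itself — has averages settling down near $|v(0)|$.

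Concretely, first I would fix $\epsilon>0$ and $n\in\mathbb{N}$, and use the definition $\rho_{\psi,\varphi}=\lim_k\|\psi_{(k)}\|_\infty^{1/k}$ together with the submultiplicativity $\|\psi_{(j+k)}\|_\infty\leqslant\|\psi_{(j)}\|_\infty\,\|\psi_{(k)}\|_\infty$ to argue that $\|\psi_{(n)}\|_\infty^{1/n}\geqslant\rho_{\psi,\varphi}$ for every $n$; hence there exists $m_0\in\mathfrak{M}_{L^\infty}$ (the supremum norm on $H^\infty$ is attained on the Shilov boundary) with $|\psi_{(n)}(m_0)|^{1/n}>\rho_{\psi,\varphi}-\epsilon$. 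That disposes of the first inequality and simultaneously selects the point $m_0$. The real content is the second inequality: I need to choose $n'>n$ so that the \emph{backward} average $\frac1{n'}\sum_{j=1}^{n'}\log|\psi(\varphi_{-j}(m_0))|=\frac1{n'}\log|\psi_{(n')}\circ\varphi_{-n'}(m_0)|$ is below $\log(|v(0)|+\epsilon)$.

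For this, the key observation is that $|\psi(m)|\leqslant|v^*(m)|$ on $\mathfrak{M}_{L^\infty}$, where $\log|v^*|$ is (a.e. equal to) an upper semi-continuous function on $\partial D$, so by Lemma~\ref{ty} applied to $f=\log|v^*|$ the averages $\frac1{N}\sum_{j=0}^{N-1}f\circ\varphi_{-j}$ converge uniformly in the limsup to $\frac1{2\pi}\int_0^{2\pi}\log|v^*|\,d\theta=\log|v^*(0)|$. If it happened that $|v^*(0)|=|v(0)|$ we would be done immediately with $n'$ any sufficiently large integer, uniformly in the base point. In general $|v^*(0)|$ may exceed $|v(0)|$, and here is where I expect the main obstacle: I must instead run Birkhoff's theorem on $(\mathfrak{M}_{L^\infty},\mu_0)$, where by Remark~\ref{mu} the averages $\frac1{N}\sum_{j=1}^{N}\log|\psi\circ\varphi_j|$ converge $\mu_0$-a.e. to $\log|v(0)|$ — but Birkhoff gives this only almost everywhere, not at the specific point $\varphi_{-n'}(m_0)$, and I have no a priori control on where $m_0$ sits relative to the full-measure set. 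The way around this is to replace the single point $m_0$ by a whole neighbourhood: using that $\log|\psi|\leqslant\log|v^*|$ with $\log|v^*|$ upper semi-continuous, and that Lemma~\ref{ty}'s convergence of the supremum is uniform, one shows that for each sufficiently large $n'$ the set of points $m\in\mathfrak{M}_{L^\infty}$ with $\frac1{n'}\log|\psi_{(n')}\circ\varphi_{-n'}(m)|<|v(0)|+\epsilon$ — equivalently, with small backward average — has $\mu_0$-measure tending to $1$ as $n'\to\infty$ (by Egorov/dominated convergence against the Birkhoff limit); combining this with a careful re-selection of $m_0$, chosen in the clopen hull of a positive-measure set on which \emph{both} the large-forward-average condition at scale $n$ and the small-backward-average condition at some scale $n'$ hold simultaneously, yields the point we want. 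The finite-intersection-property argument, exactly as in the proof of Proposition~\ref{rad}, is the tool that lets one pass from positive Lebesgue measure on $\partial D$ to a nonempty intersection in $\mathfrak{M}_{L^\infty}$, and then one reads off $m_0$ from that intersection. I anticipate the delicate point is precisely this coordination — ensuring the forward condition at the fixed scale $n$ and the backward condition at the free scale $n'$ can be met at one and the same point — which is why the statement allows $n'$ to be chosen after $n$ and $\epsilon$, rather than demanding uniformity.
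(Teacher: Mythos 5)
Your overall architecture is the right one and matches the paper's in outline: the backward Birkhoff averages on $(\mathfrak{M}_{L^\infty},\mu_0)$ converge $\mu_0$-almost everywhere to $\log|v(0)|$ (Remark \ref{mu}, applied along $\varphi_{-1}$), the forward inequality comes from $||\psi_{(n)}||_\infty\geqslant\rho_{\psi,\varphi}^n$, and $n'$ is allowed to depend on the chosen point. But there is a genuine gap at exactly the step you yourself flag as delicate. Knowing that the supremum of $|\psi_{(n)}|$ is attained somewhere on the Shilov boundary produces only a single point satisfying the forward inequality, and that point may well miss the $\mu_0$-full-measure set of Birkhoff points. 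Your proposed repair invokes ``a positive-measure set on which both the large-forward-average condition and the small-backward-average condition hold simultaneously,'' but the existence of such a set is precisely what must be proved: you never show that the open set $J=\{m\in\mathfrak{M}_{L^\infty} : |\psi_{(n)}(m)|>(\rho_{\psi,\varphi}-\epsilon)^n\}$ has positive $\mu_0$-measure. This is the crux of the paper's argument: if $\mu_0(J)=0$, then by the representing-measure formula (\ref{Poisson}) one gets $|\psi_{(n)}(w)|=\bigl|\int_{\mathfrak{M}_{L^\infty}}\psi_{(n)}P_w\,d\mu_0\bigr|\leqslant(\rho_{\psi,\varphi}-\epsilon)^n$ for every $w\in D$, contradicting $||\psi_{(n)}||_\infty\geqslant\rho_{\psi,\varphi}^n$; hence $\mu_0(J)>0$, and one simply picks $m_0\in J$ inside the full-measure set where the backward averages converge and then takes $n'$ large. (Equivalently, one can use that $\sup_D|\psi_{(n)}|$ equals the essential supremum of the boundary values while $\mu_0$ corresponds to normalized Lebesgue measure.) Without some argument of this kind, your selection of $m_0$ cannot be coordinated with the almost-everywhere statement, and the proof does not close.

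Two secondary points. The appeal to the finite-intersection-property argument of Proposition \ref{rad} is misplaced: there is no family of rotated closed sets indexed by all of $\partial D$ to intersect here, and once $\mu_0(J)>0$ is established the conclusion follows by intersecting $J$ with a single set of full (or near-full) measure, so neither the FIP device nor the Egorov-type step is needed. Likewise, the digression through $v^*$ and Lemma \ref{ty} contributes nothing to the second inequality, since in general $|v^*(0)|>|v(0)|$ and the relevant convergence must come from the measure-theoretic model $(\mathfrak{M}_{L^\infty},\mu_0)$, not from the upper semi-continuous majorant.
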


\begin{proof}
According to Remark \ref{mu}, the equation
\begin{align}\label{jj}
\lim_{k\to\infty}\left|\prod_{j=1}^{k}\psi\comp\varphi_{-j}(m)\right|^{1/k}=|v(0)|
\end{align}
holds for $\mu_0$-almost every $m$ in $\mathfrak{M}_{L^\infty}$. By the definition of $\rho_{\psi, \varphi}$, for any $\epsilon>0$ and $n\in\mathbb{N}$, the set
$$J=\{m\in\mathfrak{M}_{L^\infty} : |\psi_{(n)}(m)|>(\rho_{\psi, \varphi}-\epsilon)^n\}$$
is a non-empty open set in $\mathfrak{M}_{L^\infty}$. Moreover, since $||\psi_{(n)}||_\infty\geqslant\rho_{\psi, \varphi}^n$, by (\ref{Poisson}) we have $\mu_0(J)> 0$. Therefore we can pick a point $m_0\in J$ such that (\ref{jj}) holds for $m_0$. Thus for $n'\in\mathbb{N}$ large enough, we have
$$|\psi_{(n')}\comp\varphi_{-n'}(m_0)|^{\frac{1}{n'}}=\left|\prod_{j=1}^{n'}\psi\comp\varphi_{-j}(m_0)\right|^{1/n'}<|v(0)|+\epsilon.$$
So $m_0$ is the point we want.
\end{proof}

We now prove our main theorem in this section, with the help of Lemma \ref{ts1}.

\begin{theorem}\label{inver}
Suppose $\varphi$ is an irrational rotation and $\psi\in H^\infty$. Let $v$ be the outer part of $\psi$. If $\rho_{\psi,\varphi}$ is greater than $|v(0)|$, then the set
$$\{\lambda\in\mathbb{C} : |v(0)|\leqslant|\lambda|\leqslant \rho_{\psi,\varphi}\}$$
is contained in the essential spectrum of $C_{\psi,\varphi}$ on $A_\alpha^p$.
\end{theorem}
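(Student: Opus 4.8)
The plan is to show that every $\lambda$ in the annulus $\{|v(0)|\leqslant|\lambda|\leqslant\rho_{\psi,\varphi}\}$ lies in $\sigma_e(C_{\psi,\varphi})$ by constructing, for each such $\lambda$, a singular Weyl sequence, that is, a sequence $(f_k)$ in $A_\alpha^p$ with $\|f_k\|=1$, $f_k\to 0$ weakly, and $\|(C_{\psi,\varphi}-\lambda)f_k\|\to 0$; the existence of such a sequence precludes $C_{\psi,\varphi}-\lambda$ from being Fredholm. By Lemma \ref{ri} the essential spectrum is rotation-invariant, so it suffices to produce singular Weyl sequences for the two \emph{radii} $|\lambda|=\rho_{\psi,\varphi}$ and $|\lambda|=|v(0)|$, provided one also checks that $\sigma_e$ is closed (it always is) so that the full closed annulus is covered once we know it contains points of every intermediate modulus --- but in fact the cleanest route is to treat a general $\lambda$ with $|v(0)|\leqslant|\lambda|\leqslant\rho_{\psi,\varphi}$ directly and exploit Lemma \ref{ts1}.

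First I would reduce to $\varphi(z)=\eta z$ an irrational rotation (already done in the statement) and recall that $C_{\psi,\varphi}^n=C_{\psi_{(n)},\varphi_n}$ with $C_{\varphi_n}$ unitary on $A_\alpha^p$. The guiding heuristic is the one from the unweighted theory: formal eigenfunctions of $C_{\psi,\varphi}$ for eigenvalue $\lambda$ are built from the ``orbit products'' $\psi_{(n)}$, and the point $m_0\in\mathfrak{M}_{L^\infty}$ furnished by Lemma \ref{ts1} is precisely where the forward products $|\psi_{(n)}(m_0)|^{1/n}$ are as large as $\rho_{\psi,\varphi}$ while the backward products $|\psi_{(n')}\!\circ\!\varphi_{-n'}(m_0)|^{1/n'}$ are as small as $|v(0)|$. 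The key step is to manufacture genuine test functions in $A_\alpha^p$ from this boundary data. Given $\epsilon>0$, $n$, and the point $m_0$ and index $n'$ from Lemma \ref{ts1}, I would: (i) pick a point $w\in D$ close enough to $m_0$ (in the topology of $\mathfrak{M}_\infty$, using that $D$ is dense) that $|\psi_{(n)}(w)|^{1/n}>\rho_{\psi,\varphi}-\epsilon$ and $|\psi_{(n')}(\varphi_{-n'}(w))|^{1/n'}<|v(0)|+\epsilon$; (ii) use a normalized reproducing-kernel-type function $k_w$ concentrated near $w$, then form the partial sum
\[
f=\sum_{j=-n'}^{n}\frac{\lambda^{\,j}}{\psi_{(j)}(\cdot)}\,\bigl(k_w\!\circ\!\varphi_{-j}\bigr),
\]
with the convention $\psi_{(-j)}=1/\bigl(\psi_{(j)}\!\circ\!\varphi_{-j}\bigr)$, chosen so that $(C_{\psi,\varphi}-\lambda)f$ telescopes down to the two boundary terms $j=n$ and $j=-n'$. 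Because $k_w$ is localized near $w$ and $C_{\varphi_j}$ is unitary, each summand has norm comparable to $|\lambda|^j/|\psi_{(j)}(w)|$ up to a factor $1+o(1)$; the estimates on $|\psi_{(n)}(w)|$ and $|\psi_{(n')}(\varphi_{-n'}(w))|$ force the two end terms of $(C_{\psi,\varphi}-\lambda)f$ to be negligible relative to $\|f\|$ as $n,n'\to\infty$ and $\epsilon\to 0$. Letting $w$ run to the boundary produces the weak-null condition.

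The main obstacle I expect is step (ii): controlling $\|f\|$ from below (so the normalized sequence does not degenerate) while simultaneously controlling the sum $\sum_{j=-n'}^{n}|\lambda|^j/|\psi_{(j)}(w)|$ of moduli that appears in $(C_{\psi,\varphi}-\lambda)f$, since the intermediate terms $0<j<n$ (and $-n'<j<0$) need not be small --- only their \emph{geometric mean} behaves well. The standard device is to replace the flat sum by a slowly varying weight, i.e. use $f=\sum_j c_j\,\lambda^j\psi_{(j)}^{-1}(k_w\!\circ\!\varphi_{-j})$ with a tent-shaped sequence $(c_j)$ supported on $[-n',n]$, so that $(C_{\psi,\varphi}-\lambda)f$ has size controlled by $\max_j|c_{j-1}-c_j|$ times a bounded quantity, which is $O(1/n+1/n')$; the geometric-mean hypotheses enter only to guarantee that for suitable $w$ the relevant partial products stay bounded on the support. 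Assembling this, together with the localization properties of $k_w$ in $A_\alpha^p$ and the unitarity of $C_{\varphi_j}$, and finally diagonalizing over $\epsilon\downarrow 0$, $n\uparrow\infty$, $w\to\partial D$, yields the desired singular Weyl sequence for each $\lambda$ with $|v(0)|\leqslant|\lambda|\leqslant\rho_{\psi,\varphi}$, and hence the theorem.
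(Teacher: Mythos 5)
Your plan follows the same broad philosophy as the paper (telescoping sums along the orbit, fed by Lemma \ref{ts1}, plus rotation invariance), but as written it has two genuine gaps. First, your building blocks $\lambda^{j}\psi_{(j)}^{-1}\,(k_w\circ\varphi_{-j})$ divide by the forward products $\psi_{(j)}$. The theorem must hold for arbitrary $\psi\in H^\infty$ --- in particular for $\psi$ with zeros in $D$, which is exactly the situation in which it is later invoked (Corollary \ref{main2}, Theorem \ref{main3}) --- and then $1/\psi_{(j)}$ is not analytic, so your $f$ is not in $A_\alpha^p$ (even for zero-free $\psi$, $1/\psi_{(j)}$ need not be bounded and the terms need not lie in $A_\alpha^p$). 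The paper's telescoping function $f_k=\sum_{j=0}^{n_k-1}\lambda^{k-j-1}\psi_{(j)}\cdot g_k\circ\varphi_{j-k}$ only \emph{multiplies} by $\psi_{(j)}$; the smallness of the backward products is encoded inside the test function $g_k$ itself, which is built as a high power of an outer function whose modulus on $\mathfrak{M}_{L^\infty}$ is prescribed from the data of Lemma \ref{ts1} (this also replaces your reproducing-kernel localization, whose claim that each summand has norm $\bigl(1+o(1)\bigr)|\lambda|^{j}/|\psi_{(j)}(w)|$ is unjustified: $|\psi_{(j)}|$ is not multiplicatively stable on the effective support of $k_w$ for a general $H^\infty$ weight).

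Second, your proposed cure for the main obstacle --- the tent-shaped weights $(c_j)$ --- does not close the argument: bounding $(C_{\psi,\varphi}-\lambda)f$ by $\max_j|c_{j+1}-c_j|$ requires the \emph{intermediate} terms $\|u_j\|\sim|\lambda|^{j}/|\psi_{(j)}(w)|$ (and their backward analogues) to be uniformly controlled on the support, and it gives no lower bound for $\|f\|$ against cancellation. But Lemma \ref{ts1} controls the partial products only at the two times $n$ and $n'$; in between, $|\psi_{(j)}(m_0)|$ can be arbitrarily small and the backward products arbitrarily large, so ``the relevant partial products stay bounded on the support'' is precisely what is not available. The paper resolves this with a different device: it keeps the flat sum, forms it for the $n_k$ rotated parameters $\lambda_{k,s}=\lambda\mathrm{e}^{2\pi i s/n_k}$, uses the exact identity $\sum_{s}\lambda_{k,s}f_{k,s}=n_k\,\psi_{(k)}g_k$ to extract one $s_k$ with $\lambda\|f_{k,s_k}\|\geqslant\|\psi_{(k)}g_k\|$, passes to a limit point $\lambda_0$ on the circle $|\lambda_0|=\lambda$, shows $\lambda_0$ is not an eigenvalue by the Taylor-coefficient argument at $0$ (so non-boundedness from below really gives $\lambda_0\in\sigma_e$), and finally recovers the whole circle by Lemma \ref{ri}. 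Unless you either adopt multiplication-only building blocks with the backward control built into $g_k$, or supply some substitute for the roots-of-unity averaging, the construction as proposed does not produce the required singular sequence.
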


\begin{proof}
First fix an arbitrary positive number $\lambda$ in the interior of the target set, and take $q_1,q_2>0$ such that $|v(0)|<q_1<\lambda<q_2<\rho_{\psi, \varphi}$.

We now claim that for each $k\in\mathbb{N}$, we can find $n_k>2k$ and $g_k\in A^p_\alpha$ such that
\begin{align}\label{1by}
\left|\left|\psi_{(k)}\cdot g_k\right|\right|>q_2^{k}||g_k||
\end{align}
and
\begin{align}\label{2by}
\left|\left|\psi_{(n_k)}\cdot g_k\comp\varphi_{n_k-k}\right|\right|<q_1^{n_k-k}\left|\left|\psi_{(k)}g_k\right|\right|.
\end{align}

In fact, by Lemma \ref{ts1}, for any fixed $k\in\mathbb{N}$ and any $\epsilon>0$ there exist $m\in\mathfrak{M}_{L^\infty}$ and $n_k>2k$ such that
$$\left|\psi_{(k)}(m)\right|^{\frac{1}{k}}=l_1>\rho_{\psi, \varphi}-\epsilon$$
and
$$\left|\psi_{(n_k-k)}\comp\varphi_{k-n_k}(m)\right|^{\frac{1}{n_k-k}}=l_2<|v(0)|+\epsilon.$$
Let
$$h_k=(1+\epsilon)\cdot\min\left\{\frac{|\psi_{(k)}|}{l_1^k} , \frac{l_2^{n_k-k}}{|\psi_{(n_k-k)}\comp\varphi_{k-n_k}|}\right\},$$
then $h_k$ is a continuous function on $\mathfrak{M}_{L^\infty}$, and $h_k(m)=1+\epsilon$. Moreover, since
$$\int_{\mathfrak{M}_{L^\infty}}\log h_kd\mu_0>-\infty,$$
by Remark \ref{logre} we can take outer function $\tilde{h}_k\in H^\infty\subset A_\alpha^p$ such that $|\tilde{h}_k|=h_k$ on $\mathfrak{M}_{L^\infty}$. Let $\tilde{g}_k=\phi_{(k)}\tilde{h}_k$ where $\phi=\psi/v$ is the inner part of $\psi$. Then
$$||\tilde{g}_k||_\infty\geqslant|\tilde{g}_k(m)|=h_k(m)=1+\epsilon>1.$$

Now let $E_k=\{z\in D : |\tilde{g}_k|\geqslant1\}$. Notice that
\begin{align*}
E_k\subset\left\{z : |\psi_{(k)}(z)|\geqslant\frac{l_1^k}{1+\epsilon}\right\}\bigcap\left\{z : |\psi_{(n_k-k)}\comp\varphi_{k-n_k}(z)|\leqslant(1+\epsilon)l_2^{n_k-k}\right\}.
\end{align*}
So by the Lebesgue Dominated Convergence Theorem, for $N\in\mathbb{N}$ large enough we have
\begin{align*}
\left|\left|\psi_{(k)}\cdot \tilde{g}_k^N\right|\right|&\geqslant\left(\int_{E_k}\left|\psi_{(k)}\tilde{g}_k^N\right|^pdA_\alpha\right)^{1/p}\\
&\geqslant\frac{l_1^{k}}{1+\epsilon}\left(\int_{E_k}|\tilde{g}_k|^{pN}dA_\alpha\right)^{1/p}\\
&\geqslant\frac{1-\epsilon}{1+\epsilon}\cdot(\rho_{\psi,\varphi}-\epsilon)^{k}\left|\left|\tilde{g}_k^N\right|\right|,
\end{align*}
and
\begin{align*}
\left|\left|\psi_{(n_k)}\cdot \tilde{g}_k^N\comp\varphi_{n_k-k}\right|\right|&=\left|\left|\psi_{(n_k-k)}\comp\varphi_{k-n_k}\cdot\psi_{(k)}\cdot \tilde{g}_k^N\right|\right|\\
&\leqslant\frac{1}{1-\epsilon}\left(\int_{E_k}\left|\psi_{(n_k-k)}\comp\varphi_{k-n_k}\cdot\psi_{(k)}\cdot \tilde{g}_k^N\right|^pdA_\alpha\right)^{1/p}\\
&\leqslant\frac{1+\epsilon}{1-\epsilon}\cdot l_2^{n_k-k}\left(\int_{E_k}\left|\psi_{(k)}\tilde{g}_k^N\right|^pdA_\alpha\right)^{1/p}\\
&\leqslant \frac{1+\epsilon}{1-\epsilon}\cdot (|v(0)|+\epsilon)^{n_k-k}\left|\left|\psi_{(k)}\tilde{g}_k^N\right|\right|.
\end{align*}
By taking $\epsilon>0$ sufficiently small and taking $g_k=\tilde{g}_k^N$, we have (\ref{1by}) and (\ref{2by}) hold.

Let
$$f_k=\sum_{j=0}^{n_k-1}\lambda^{k-j-1}\psi_{(j)}\cdot g_k\comp\varphi_{j-k}.$$
Then
$$\left(C_{\psi,\varphi}-\lambda\right)f_k=\frac{\psi_{(n_k)}\cdot g_k\comp\varphi_{n_k-k}}{\lambda^{n_k-k}}-\lambda^k g_k\comp\varphi_{-k}.$$
So
\begin{align}\label{ti}
||\left(C_{\psi,\varphi}-\lambda\right)f_k||&\leqslant\frac{||\psi_{(n_k)}\cdot g_k\comp\varphi_{n_k-k}||}{\lambda^{n_k-k}}+\lambda^k ||g_k\comp\varphi_{-k}||\nonumber\\
&\leqslant\left(\frac{q_1}{\lambda}\right)^{n_k-k}\left|\left|\psi_{(k)}g_k\right|\right|+\left(\frac{\lambda}{q_2}\right)^{k}\left|\left|\psi_{(k)}g_k\right|\right|.
\end{align}
Here the last inequality follows from (\ref{1by}) and (\ref{2by}).

Let $\lambda_{k,s}=\lambda \text{e}^{\frac{2s\pi i}{n_k}}$ for $s=1,2,3,...,n_k$. Replacing $\lambda$ in the definition of each function $f_k$ by $\{\lambda_{k,s}\}_{s=1}^{n_k}$, we get $n_k$ functions, denoted by $\{f_{k,s}\}_{s=1}^{n_k}$ respectively. It is easy to check that
$$\sum_{s=1}^{n_k}\lambda_{k,s} f_{k,s}=n_k\cdot\psi_{(k)}\cdot g_k,$$
so there exist $s_k\in\{1,2,3,...,n_k\}$ such that $\lambda||f_{k,s_k}||\geqslant||\psi_{(k)}g_k||.$ However, (\ref{ti}) implies that
$$\lim_{k\to\infty}\frac{||\left(C_{\psi,\varphi}-\lambda_{k,s_k}\right)f_{k,s_k}||}{\left|\left|\psi_{(k)}g_k\right|\right|}=0.$$
By passing to a subsequence, we may assume that the sequence $\{\lambda_{k,s_k}\}_{k=1}^{\infty}$ convergences to a point $\lambda_0$. Then
\begin{align*}
\lim_{k\to\infty}\frac{||\left(C_{\psi,\varphi}-\lambda_0\right)f_{k,s_k}||}{||f_{k,s_k}||}&=\lim_{k\to\infty}\frac{||\left(C_{\psi,\varphi}-\lambda_{k,s_k}\right)f_{k,s_k}||}{||f_{k,s_k}||}\\
&\leqslant\lambda\cdot\lim_{k\to\infty}\frac{||\left(C_{\psi,\varphi}-\lambda_{k,s_k}\right)f_{k,s_k}||}{\left|\left|\psi_{(k)}g_k\right|\right|}\\
&=0.
\end{align*}
This means that the operator $C_{\psi,\varphi}-\lambda_0$ is not bounded from below on $A_\alpha^p$. Now we want to show that $\lambda_0$ is not an eigenvalue of $C_{\psi,\varphi}$. If this is true, then $\lambda_0$ belongs to $\sigma_{ap}(C_{\psi,\varphi})\backslash\sigma_{p}(C_{\psi,\varphi})\subset\sigma_{e}(C_{\psi,\varphi})$, and by Lemma \ref{ri} we can get our conclusion.

To this end, let's assume that there exist $f\in A_\alpha^p\backslash\{0\}$ such that $C_{\psi,\varphi}f=\lambda_0f$. Suppose $f=\sum_{j=K}^\infty a_jz^j$ where $a_K\ne 0$. Then
$$\psi\cdot\sum_{j=K}^\infty a_j\varphi^j=\sum_{j=K}^\infty \lambda_0a_jz^j.$$
Taking $K$-th derivative at zero to both sides of this equation we can see that $|\lambda_0|=|\psi(0)|$. But this is impossible since $$|\lambda_0|=\lambda>|v(0)|\geqslant|\psi(0)|.$$
\end{proof}

Now we can give our final result in this section as follows.

\begin{corollary}\label{main1}
Suppose $\psi\in (H^\infty)^{-1}$ and $\varphi$ is an elliptic automorphism of order $\infty$. Then on $A_\alpha^p$ one has
$$\sigma(C_{\psi,\varphi})=\sigma_e(C_{\psi,\varphi})=\{\lambda\in\mathbb{C} : \rho_{\frac{1}{\psi},\varphi}^{-1}\leqslant|\lambda|\leqslant \rho_{\psi,\varphi}\}.$$
\end{corollary}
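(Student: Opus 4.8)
The plan is to bootstrap from Theorem \ref{inver}: apply it to $C_{\psi,\varphi}$ to fill in the outer part of the annulus, apply it to $C_{\psi,\varphi}^{-1}$ to fill in the inner part, and then treat by hand the degenerate case in which the annulus collapses to a circle.

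First I would reduce to the model case. By (\ref{xs}) the operator $C_{\psi,\varphi}$ is similar to $C_{\psi\comp\varphi_a,\tau}$ with $\tau$ an irrational rotation; similarity preserves both $\sigma$ and $\sigma_e$, while $\rho_{\psi,\varphi}$ and $\rho_{1/\psi,\varphi}$ are conjugation invariant, so we may assume $\varphi(z)=\eta z$ is an irrational rotation. Since $\psi\in(H^\infty)^{-1}$ has no zeros and $1/\psi\in H^\infty$, $\psi$ is outer, so its outer part $v$ equals $\psi$ and $|v(0)|=|\psi(0)|$. Put $r_+=\rho_{\psi,\varphi}$ and $r_-=\rho_{1/\psi,\varphi}^{-1}$; by Proposition \ref{zc}, $r_-\leqslant|\psi(0)|\leqslant r_+$. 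The spectral radius of $C_{\psi,\varphi}$ is $r_+$ (Proposition \ref{zc}), so $\sigma(C_{\psi,\varphi})\subseteq\{|\lambda|\leqslant r_+\}$; and $C_{\psi,\varphi}$ is invertible with $C_{\psi,\varphi}^{-1}=C_{\tilde\psi,\varphi_{-1}}$, $\tilde\psi=1/(\psi\comp\varphi_{-1})\in(H^\infty)^{-1}$, where a short computation with the unitaries $C_{\varphi_n}$ gives $\|C_{\psi,\varphi}^{-n}\|=\|(1/\psi)_{(n)}\|_\infty$, so the spectral radius of $C_{\psi,\varphi}^{-1}$ is $\rho_{1/\psi,\varphi}=1/r_-$; by the spectral mapping theorem $\sigma(C_{\psi,\varphi})\subseteq\{|\lambda|\geqslant r_-\}$. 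Thus $\sigma_e(C_{\psi,\varphi})\subseteq\sigma(C_{\psi,\varphi})\subseteq A:=\{r_-\leqslant|\lambda|\leqslant r_+\}$.

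For the reverse inclusion $A\subseteq\sigma_e(C_{\psi,\varphi})$ I would argue in two pieces. If $r_+>|\psi(0)|$, Theorem \ref{inver} gives $\{|\psi(0)|\leqslant|\lambda|\leqslant r_+\}\subseteq\sigma_e(C_{\psi,\varphi})$. Applying Theorem \ref{inver} to $C_{\psi,\varphi}^{-1}=C_{\tilde\psi,\varphi_{-1}}$ (here $\varphi_{-1}$ is again an irrational rotation, $\tilde\psi$ is outer with $\tilde\psi(0)=1/\psi(0)$, and the spectral radius of $C_{\tilde\psi,\varphi_{-1}}$ is $1/r_-$): if $r_-<|\psi(0)|$ one gets $\{1/|\psi(0)|\leqslant|\mu|\leqslant 1/r_-\}\subseteq\sigma_e(C_{\psi,\varphi}^{-1})$, and since $C_{\psi,\varphi}$ is invertible the identity $C_{\psi,\varphi}^{-1}-\mu=-\mu\,C_{\psi,\varphi}^{-1}(C_{\psi,\varphi}-1/\mu)$ together with Atkinson's theorem shows $\mu\in\sigma_e(C_{\psi,\varphi}^{-1})\Leftrightarrow 1/\mu\in\sigma_e(C_{\psi,\varphi})$, whence $\{r_-\leqslant|\lambda|\leqslant|\psi(0)|\}\subseteq\sigma_e(C_{\psi,\varphi})$. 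Because $r_-\leqslant|\psi(0)|\leqslant r_+$, if at least one of the strict inequalities $r_+>|\psi(0)|$, $r_-<|\psi(0)|$ holds, these two pieces (one of which may shrink to the circle $|\lambda|=|\psi(0)|$, which then lies in the other) already cover $A$.

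The remaining case, which I expect to be the main obstacle, is $r_+=|\psi(0)|=r_-=:r$: then $A$ is the single circle $\{|\lambda|=r\}$, $\sigma(C_{\psi,\varphi})\subseteq\{|\lambda|=r\}$ has empty interior, and Theorem \ref{inver} says nothing. Here I would use the finite-codimension invariant subspaces: for $k\in\mathbb{N}$, $z^kA_\alpha^p$ is closed and $C_{\psi,\varphi}$-invariant, and $f\mapsto z^kf$ is an isomorphism of $A_\alpha^p$ onto it intertwining $\eta^kC_{\psi,\varphi}$ with $C_{\psi,\varphi}|_{z^kA_\alpha^p}$; since the boundary of the spectrum of a restriction to a closed invariant subspace lies in the spectrum of the operator and $\partial\sigma(C_{\psi,\varphi})=\sigma(C_{\psi,\varphi})$, we get $\eta^k\sigma(C_{\psi,\varphi})\subseteq\sigma(C_{\psi,\varphi})$ for every $k$. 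As $\{\eta^k:k\in\mathbb{N}\}$ is dense in $\partial D$, $\sigma(C_{\psi,\varphi})$ is closed and nonempty, and $\sigma(C_{\psi,\varphi})\subseteq\{|\lambda|=r\}$, this forces $\sigma(C_{\psi,\varphi})=\{|\lambda|=r\}$. On the other hand, comparing lowest-order Taylor coefficients as in the proof of Theorem \ref{inver} actually shows every eigenvalue of $C_{\psi,\varphi}$ is of the form $\eta^K\psi(0)$, so $\sigma_p(C_{\psi,\varphi})$ is countable; hence there is $\lambda^*\in\sigma(C_{\psi,\varphi})\setminus\sigma_p(C_{\psi,\varphi})$, and $\lambda^*\in\partial\sigma(C_{\psi,\varphi})\subseteq\sigma_{ap}(C_{\psi,\varphi})$, so $C_{\psi,\varphi}-\lambda^*$ is injective but not bounded below, hence not Fredholm, i.e. $\lambda^*\in\sigma_e(C_{\psi,\varphi})$. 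By Lemma \ref{ri} the whole circle $\{|\lambda|=r\}$ lies in $\sigma_e(C_{\psi,\varphi})$, and with $\sigma_e(C_{\psi,\varphi})\subseteq\sigma(C_{\psi,\varphi})=\{|\lambda|=r\}=A$ this closes the degenerate case. The remaining items — the formula $C_{\psi,\varphi}^{-1}=C_{\tilde\psi,\varphi_{-1}}$ and $\|C_{\psi,\varphi}^{-n}\|=\|(1/\psi)_{(n)}\|_\infty$, the permanence of Fredholmness under inversion, and the fact that $z^kA_\alpha^p$ is a closed invariant subspace with the stated intertwining — are routine.
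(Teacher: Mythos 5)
Your proof is correct and takes essentially the same route as the paper: the inclusion $\sigma(C_{\psi,\varphi})\subseteq\{\rho_{\frac{1}{\psi},\varphi}^{-1}\leqslant|\lambda|\leqslant\rho_{\psi,\varphi}\}$ comes from the spectral radii of $C_{\psi,\varphi}$ and $C_{\psi,\varphi}^{-1}=C_{\frac{1}{\psi\comp\varphi_{-1}},\varphi_{-1}}$, and the reverse inclusion from Theorem \ref{inver} applied to both the operator and its inverse, exactly as in the paper's two-case proof. The only deviation is the degenerate case $\rho_{\frac{1}{\psi},\varphi}^{-1}=\rho_{\psi,\varphi}$, which the paper settles at once from Lemma \ref{ri} together with the non-emptiness of $\sigma_e$ on the infinite-dimensional space $A_\alpha^p$; your invariant-subspace and countable-eigenvalue argument reaches the same conclusion correctly, just less directly.
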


\begin{proof}
If $\rho_{\frac{1}{\psi},\varphi}^{-1}=\rho_{\psi,\varphi}$, then the result follows directly from Lemma \ref{ri}.

If $\rho_{\frac{1}{\psi},\varphi}^{-1}<\rho_{\psi,\varphi}$, then the result is a combination of Theorem \ref{inver} and the fact that $C_{\psi,\varphi}$ is invertible with $C_{\psi,\varphi}^{-1}=C_{\frac{1}{\psi\comp\varphi_{-1}},\varphi_{-1}}$.
\end{proof}

\section{Spectra of Non-invertible Operators}

In this section, we will discuss the (essential) spectrum of $C_{\psi,\varphi}$ when it is not invertible. Since $\varphi$ is an automorphism, if $C_{\psi,\varphi}$ is not invertible, then $\psi$ must have zeros in $\mathfrak{M}_{\infty}$. It turns out that the essential spectrum of $C_{\psi,\varphi}$ depends much on the location of the zeros of $\psi$.

First let us treat the cases when $\psi$ has zeros in $\mathfrak{M}_{L^\infty}$. This is equivalent to the condition that the outer part of $\psi$ dose not belongs to $(H^\infty)^{-1}$. The following two results, Lemma \ref{ts2} and Theorem \ref{noninver}, are parallel to Lemma \ref{ts1} and Theorem \ref{inver} respectively.

\begin{lemma}\label{ts2}
Suppose $\psi\in H^\infty$ and $\varphi$ is an irrational rotation. Let $v$ be the outer part of $\psi$ and
$$r_{\psi,\varphi}=\lim_{k\to\infty}\inf_{m\in\mathfrak{M}_{L^\infty}}|\psi_{(k)}(m)|^{1/k}.$$
If $r_{\psi,\varphi}$ is less than $|v(0)|$, then for any $\epsilon>0$ and $n\in\mathbb{N}$, there exist $m_0\in\mathfrak{M}_{L^\infty}$ and $n'>n$ such that
$$|\psi_{(n')}(m_0)|^{1/n'}>|v(0)|-\epsilon$$
and
$$|\psi_{(n)}\comp\varphi_{-n}(m_0)|^{1/n}<r_{\psi, \varphi}+\epsilon.$$
\end{lemma}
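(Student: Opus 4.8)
The plan is to mimic the proof of Lemma \ref{ts1}, but with the roles of "large values" and "small values" of $\psi_{(k)}$ interchanged, using the ergodicity of $\varphi$ on $\mathfrak{M}_{L^\infty}$ with respect to $\mu_0$ (Remark \ref{mu}) once again. By Remark \ref{mu}, the equation
$$\lim_{k\to\infty}|\psi_{(k)}(m)|^{1/k}=|v(0)|$$
holds for $\mu_0$-almost every $m\in\mathfrak{M}_{L^\infty}$. The key point to establish is that the set of points $m$ where $\psi_{(n)}$ is "small" — more precisely the set
$$J=\bigl\{m\in\mathfrak{M}_{L^\infty} : |\psi_{(n)}\comp\varphi_{-n}(m)| < (r_{\psi,\varphi}+\epsilon)^n\bigr\}$$
— has positive $\mu_0$-measure. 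Since $\varphi_{-n}$ is $\mu_0$-measure-preserving, $\mu_0(J) = \mu_0\bigl(\{m : |\psi_{(n)}(m)| < (r_{\psi,\varphi}+\epsilon)^n\}\bigr)$, so it suffices to show the latter set has positive measure. By the definition of $r_{\psi,\varphi}$ as a liminf of infima, for $n$ large there exists a point where $|\psi_{(n)}|^{1/n}$ is close to $r_{\psi,\varphi} < r_{\psi,\varphi}+\epsilon$; because $|\psi_{(n)}|$ is continuous on $\mathfrak{M}_{L^\infty}$ the set where it is below $(r_{\psi,\varphi}+\epsilon)^n$ is a nonempty open set, and every nonempty open subset of $\mathfrak{M}_{L^\infty}$ has positive $\mu_0$-mass by the Poisson-integral formula (\ref{Poisson}) — this is exactly the argument used for $\mu_0(J)>0$ in Lemma \ref{ts1}. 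Here I should be a little careful about the order of quantifiers: $r_{\psi,\varphi}$ is defined as $\lim_k \inf_m |\psi_{(k)}(m)|^{1/k}$, so for a given small $n$ the infimum $\inf_m|\psi_{(n)}(m)|^{1/n}$ need not itself be close to $r_{\psi,\varphi}$; I would instead invoke the hypothesis $r_{\psi,\varphi}<|v(0)|$ together with subadditivity/supermultiplicativity-type estimates to guarantee that for the fixed $n$ the infimum of $|\psi_{(n)}|^{1/n}$ is $\le r_{\psi,\varphi}$ (the sequence $\inf_m|\psi_{(k)}(m)|$ is supermultiplicative in $k$, so $(\inf_m|\psi_{(k)}(m)|)^{1/k}$ increases to $r_{\psi,\varphi}$, hence $\inf_m|\psi_{(n)}(m)|^{1/n}\le r_{\psi,\varphi}<r_{\psi,\varphi}+\epsilon$ for every $n$). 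This gives the required nonempty open set, hence $\mu_0(J)>0$.

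Having secured $\mu_0(J)>0$, I can pick $m_0\in J$ for which the almost-everywhere limit from Remark \ref{mu} also holds. Then $|\psi_{(n)}\comp\varphi_{-n}(m_0)|^{1/n}<r_{\psi,\varphi}+\epsilon$ is immediate from $m_0\in J$, and the limit $\lim_{n'\to\infty}|\psi_{(n')}(m_0)|^{1/n'}=|v(0)|$ gives, for all sufficiently large $n'>n$, the inequality $|\psi_{(n')}(m_0)|^{1/n'}>|v(0)|-\epsilon$. This is exactly the pair of conclusions claimed.

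The main obstacle, as flagged above, is the bookkeeping around the definition of $r_{\psi,\varphi}$: one must verify that $\inf_{m}|\psi_{(n)}(m)|^{1/n}\le r_{\psi,\varphi}$ for the prescribed (possibly small) $n$, rather than only for large $n$, so that the "small-value" set is genuinely nonempty and open. The supermultiplicativity of $k\mapsto\inf_m|\psi_{(k)}(m)|$ — which follows from $\psi_{(k+l)}=\psi_{(k)}\cdot(\psi_{(l)}\comp\varphi_k)$ and the fact that $\varphi$ acts as a homeomorphism of $\mathfrak{M}_{L^\infty}$, so $\inf_m|\psi_{(l)}\comp\varphi_k(m)|=\inf_m|\psi_{(l)}(m)|$ — shows that $(\inf_m|\psi_{(k)}(m)|)^{1/k}$ is nondecreasing in $k$ with supremum $r_{\psi,\varphi}$, which settles the point. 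Everything else is a direct transcription of the proof of Lemma \ref{ts1}, with $\varphi$ replaced by $\varphi_{-1}$ where the backward iterates appear.
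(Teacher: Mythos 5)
Your argument is correct and essentially the paper's own: the paper likewise invokes Remark \ref{mu}, shows the small-value set $J$ has positive $\mu_0$-measure, and picks a point of $J$ where the ergodic limit holds (choosing $m_0=\varphi_n(m_1)$ rather than building $\varphi_{-n}$ into the definition of $J$, which is equivalent). The only substantive difference is bookkeeping: the paper gets $\mu_0(J)>0$ from the Poisson representation $\log|v_{(n)}(w)|=\int_{\mathfrak{M}_{L^\infty}}\log|\psi_{(n)}|P_w\,d\mu_0$ together with the very inequality $\inf_m|\psi_{(n)}(m)|^{1/n}\leqslant r_{\psi,\varphi}$ that it states without justification, so your supermultiplicativity/Fekete observation (correct, except that the sequence need only converge to its supremum, not increase monotonically) supplies a step the paper leaves implicit, and your full-support claim for $\mu_0$ is a valid alternative to the paper's Poisson-integral contradiction.
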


\begin{proof}
The proof is similar with the proof of Lemma \ref{ts1}.

In fact, according to Remark \ref{mu}, the Ergodic Theorem shows that the equation
\begin{align}\label{jjj}
\lim_{k\to\infty}\left|\psi_{(k)}(m)\right|^{1/k}=|v(0)|
\end{align}
holds for $\mu_0$-almost every $m$ in $\mathfrak{M}_{L^\infty}$. For any $n\in\mathbb{N}$ and $\epsilon>0$, let
$$J=\{m\in\mathfrak{M}_{L^\infty} : |\psi_{(n)}(m)|<(r_{\psi, \varphi}+\epsilon)^n\}.$$
Then $J$ is non-empty. Moreover, since
$$\log|v_{(n)}(w)|=\int_{\mathfrak{M}_{L^\infty}}\log|\psi_{(n)}|\cdot P_wd\mu_0$$
for $w\in D$, where $P_w\in L^\infty(\partial D)$ is the Poisson kernel for the point $w$, so the fact
$$\inf_{w\in D}|v_{(n)}(w)|^{1/n}=\inf_{m\in\mathfrak{M}_{L^\infty}}|\psi_{(n)}(m)|^{1/n}\leqslant r_{\psi,\varphi}$$
implies that $\mu_0(J)>0$. Therefore we can take $m_1\in J$ such that (\ref{jjj}) holds for $\varphi_{n}(m_1)$. Then take $n'$ large enough, and $m_0=\varphi_{n}(m_1)$ is the point we want.
\end{proof}

Note that in the previous lemma, if the function $\psi$ has zeros in $\mathfrak{M}_{L^\infty}$, then $r_{\psi,\varphi}=0$. Otherwise, if $\psi$ has no zero in $\mathfrak{M}_{L^\infty}$, then its outer part $v$ belongs to $(H^\infty)^{-1}$, so we have $r_{\psi,\varphi}=\rho_{\frac{1}{v},\varphi}^{-1}$.

\begin{theorem}\label{noninver}
Suppose $\psi\in H^\infty$ and $\varphi$ is an irrational rotation. Let $v$ be the outer part of $\psi$ and
$$r_{\psi,\varphi}=\lim_{k\to\infty}\inf_{m\in\mathfrak{M}_{L^\infty}}|\psi_{(k)}(m)|^{1/k}.$$
If $r_{\psi,\varphi}$ is less than $|v(0)|$, then
$$\{\lambda\in\mathbb{C} : r_{\psi,\varphi}\leqslant|\lambda|\leqslant|v(0)|\}$$
is contained in the essential spectrum of $C_{\psi,\varphi}$.
\end{theorem}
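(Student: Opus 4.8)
The plan is to mirror the structure of the proof of Theorem \ref{inver}, using Lemma \ref{ts2} in place of Lemma \ref{ts1} and swapping the roles of $|v(0)|$ and $\rho_{\psi,\varphi}$. Fix a positive number $\lambda$ in the interior of the target annulus and choose $q_1, q_2 > 0$ with $r_{\psi,\varphi} < q_1 < \lambda < q_2 < |v(0)|$. The idea is to build, for each $k\in\mathbb{N}$, a function $g_k\in A^p_\alpha$ and an integer $n_k > 2k$ such that the product $\psi_{(k)}\cdot g_k$ is large relative to $\psi_{(n_k)}\cdot g_k\comp\varphi_{n_k-k}$. Concretely, I would aim for
\begin{align*}
\|\psi_{(n_k)}\cdot g_k\comp\varphi_{n_k-k}\| &> q_2^{\,n_k-k}\|\psi_{(k)} g_k\|, \\
\|\psi_{(k)} g_k\| &< q_1^{\,k}\|g_k\|,
\end{align*}
which is the analogue of (\ref{1by}) and (\ref{2by}) with the inequalities reversed in the appropriate sense. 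These are obtained exactly as in Theorem \ref{inver}: Lemma \ref{ts2} produces a point $m\in\mathfrak{M}_{L^\infty}$ at which $|\psi_{(n_k)}(m)|^{1/n_k}$ is close to $|v(0)|$ while $|\psi_{(k)}\comp\varphi_{-k}(m)|^{1/k}$ is close to $r_{\psi,\varphi}$ (after relabeling indices); one forms the continuous function
$$h_k = (1+\epsilon)\cdot\min\left\{\frac{|\psi_{(n_k-k)}\comp\varphi_{k-n_k}|}{l_1^{\,n_k-k}},\ \frac{l_2^{\,k}}{|\psi_{(k)}|}\right\}$$
with $l_1$ near $|v(0)|$ and $l_2$ near $r_{\psi,\varphi}$, checks $\int\log h_k\,d\mu_0 > -\infty$, uses Remark \ref{logre} to find an outer $\tilde h_k\in H^\infty$ with $|\tilde h_k| = h_k$ on $\mathfrak{M}_{L^\infty}$, multiplies by the relevant inner factor, raises to a high power $N$, and applies the Lebesgue Dominated Convergence Theorem on the set $E_k = \{|\tilde g_k|\geqslant 1\}$.

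Having the functions $g_k$, I would set
$$f_k = \sum_{j=0}^{n_k-1}\lambda^{k-j-1}\psi_{(j)}\cdot g_k\comp\varphi_{j-k},$$
so that $(C_{\psi,\varphi}-\lambda)f_k = \lambda^{-(n_k-k)}\psi_{(n_k)}\cdot g_k\comp\varphi_{n_k-k} - \lambda^k g_k\comp\varphi_{-k}$, and estimate
$$\|(C_{\psi,\varphi}-\lambda)f_k\| \leqslant \left(\frac{q_2}{\lambda}\right)^{-(n_k-k)}\!\!\text{-type terms} + \left(\frac{q_1}{\lambda}\right)^{k}\!\!\text{-type terms},$$
both of which are $o(\|\psi_{(k)} g_k\|)$ as $k\to\infty$ by the two displayed inequalities and the choices $q_1 < \lambda < q_2$. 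Then, just as in Theorem \ref{inver}, I would rotate: set $\lambda_{k,s} = \lambda\,\mathrm{e}^{2s\pi i/n_k}$, form the corresponding $f_{k,s}$, use the identity $\sum_{s=1}^{n_k}\lambda_{k,s}f_{k,s} = n_k\,\psi_{(k)}\cdot g_k$ to pick $s_k$ with $\lambda\|f_{k,s_k}\|\geqslant\|\psi_{(k)} g_k\|$, pass to a subsequence so that $\lambda_{k,s_k}\to\lambda_0$, and conclude that $C_{\psi,\varphi}-\lambda_0$ is not bounded below. Finally, one checks $\lambda_0$ is not an eigenvalue — here if $C_{\psi,\varphi}f = \lambda_0 f$ with $f = \sum_{j\geqslant K}a_j z^j$, $a_K\neq 0$, then comparing $K$-th Taylor coefficients forces $|\lambda_0| = |\psi(0)|$, impossible since $|\lambda_0| = \lambda < |v(0)| \leqslant \cdots$ — wait, that bound goes the wrong way; instead $|\psi(0)| \leqslant |v(0)|$ does not immediately help, so I would argue more carefully: the coefficient comparison actually gives $\psi(0)a_K = \lambda_0 a_K$, i.e. $\lambda_0 = \psi(0)$, and one rules this out by noting $\lambda$ ranges over the open annulus while $\psi(0)$ is a single point, so it suffices to prove the inclusion for all $\lambda$ in the annulus distinct from $|\psi(0)|$ in modulus, then take closures using Lemma \ref{ri}. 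Thus $\lambda_0 \in \sigma_{ap}(C_{\psi,\varphi})\setminus\sigma_p(C_{\psi,\varphi}) \subset \sigma_e(C_{\psi,\varphi})$, and Lemma \ref{ri} rotates this to the full circle $|\lambda| = \lambda$; letting $\lambda$ sweep $(r_{\psi,\varphi},|v(0)|)$ and taking the closure finishes the proof.

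The main obstacle I anticipate is the same delicate point as in Theorem \ref{inver}: justifying the Dominated Convergence step, i.e. controlling the integrals over $E_k$ uniformly enough that raising $\tilde g_k$ to the $N$-th power genuinely forces (\ref{1by})--(\ref{2by})-type inequalities with the right constants, while keeping $E_k$ inside the two sublevel/superlevel sets dictated by Lemma \ref{ts2}. A secondary subtlety is bookkeeping with the inner factor of $\psi$ and with the index shifts $\varphi_{-k}$, $\varphi_{n_k-k}$ — one must be sure that the analogue of $\tilde g_k = \phi_{(k)}\tilde h_k$ is chosen so that $\psi_{(k)}\tilde g_k$ and $\psi_{(n_k)}\tilde g_k\comp\varphi_{n_k-k}$ have moduli matching $h_k$-weighted products on $\mathfrak{M}_{L^\infty}$. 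Everything else is a routine transcription of the invertible-case argument with reversed inequalities.
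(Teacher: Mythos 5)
Your overall route is the same as the paper's: use Lemma \ref{ts2} in place of Lemma \ref{ts1}, run the telescoping sum and rotation trick from Theorem \ref{inver}, exclude eigenvalues by the Taylor--coefficient argument after restricting to $\lambda$ with $|\lambda|\ne|\psi(0)|$, and finish with Lemma \ref{ri} and the closedness of $\sigma_e(C_{\psi,\varphi})$; your handling of the eigenvalue point is exactly the paper's. However, the key quantitative step is set up backwards, and as written it fails. With your $f_k=\sum_{j=0}^{n_k-1}\lambda^{k-j-1}\psi_{(j)}\cdot g_k\comp\varphi_{j-k}$ and the anchor $\|\psi_{(k)}g_k\|$ coming from the identity $\sum_s\lambda_{k,s}f_{k,s}=n_k\psi_{(k)}g_k$, you must make \emph{both} boundary terms of $(C_{\psi,\varphi}-\lambda)f_k=\lambda^{k-n_k}\psi_{(n_k)}\cdot g_k\comp\varphi_{n_k-k}-\lambda^k g_k\comp\varphi_{-k}$ small relative to $\|\psi_{(k)}g_k\|$; this requires growth \emph{faster} than $\lambda$ on the stretch before the anchor, $\|\psi_{(k)}g_k\|\gg\lambda^k\|g_k\|$, and growth \emph{slower} than $\lambda$ on the stretch after it, $\|\psi_{(n_k)}\cdot g_k\comp\varphi_{n_k-k}\|\ll\lambda^{n_k-k}\|\psi_{(k)}g_k\|$. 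Your displayed inequalities assert the opposite: $\|\psi_{(k)}g_k\|<q_1^k\|g_k\|$ with $q_1<\lambda$ forces the second boundary term to be at least $(\lambda/q_1)^k\|\psi_{(k)}g_k\|$, and $\|\psi_{(n_k)}\cdot g_k\comp\varphi_{n_k-k}\|>q_2^{n_k-k}\|\psi_{(k)}g_k\|$ with $q_2>\lambda$ forces the first to be at least $(q_2/\lambda)^{n_k-k}\|\psi_{(k)}g_k\|$. So the estimate you then claim ("both terms are $o(\|\psi_{(k)}g_k\|)$") does not follow from your hypotheses; it is contradicted by them, and no conclusion about $C_{\psi,\varphi}-\lambda_0$ failing to be bounded below can be drawn. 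The same reversal appears in your $h_k$: you make it large where the \emph{backward} product $|\psi_{(n_k-k)}\comp\varphi_{k-n_k}|$ is near $l_1^{n_k-k}\approx|v(0)|^{n_k-k}$ and the \emph{forward} product $|\psi_{(k)}|$ is below $l_2^k\approx r_{\psi,\varphi}^k$, whereas what Lemma \ref{ts2} actually supplies (note $m_0=\varphi_n(m_1)$ there), and what the argument needs, is the forward product large and the backward product small at the point where $g_k$ concentrates.

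The correct transcription, which is what the paper's proof does, re-allocates the lengths: the fast stretch (rate $\approx|v(0)|>q_2$) is the typical behaviour and can be made arbitrarily long, while Lemma \ref{ts2} prescribes a slow backward stretch (rate $\approx r_{\psi,\varphi}<q_1$) of the given length $k$. Accordingly the anchor sits at index $n_k$, the constructed $g_k$ satisfies
\begin{align*}
\left\|\psi_{(n_k)}g_k\right\|>q_2^{n_k}\|g_k\|
\qquad\text{and}\qquad
\left\|\psi_{(n_k+k)}\cdot g_k\comp\varphi_{k}\right\|<q_1^{k}\left\|\psi_{(n_k)}g_k\right\|,
\end{align*}
and one takes $f_k=\sum_{j=0}^{n_k+k-1}\lambda^{n_k-j-1}\psi_{(j)}\cdot g_k\comp\varphi_{j-n_k}$, so that the two boundary terms are controlled by $(q_1/\lambda)^{k}$ and $(\lambda/q_2)^{n_k}$ times $\|\psi_{(n_k)}g_k\|$, both of which tend to $0$. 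With this correction (and the corresponding form of $h_k$, built from $|\psi_{(n_k)}|/l_1^{n_k}$ and $l_2^{k}/|\psi_{(k)}\comp\varphi_{-k}|$), the rest of your outline --- rotation trick, choice of $\lambda_0$, exclusion of eigenvalues since $|\lambda|\ne|\psi(0)|$, then Lemma \ref{ri} and closure --- goes through verbatim and coincides with the paper's proof.
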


\begin{proof}
Fixed an arbitrary positive number $\lambda$ such that $r_{\psi,\varphi}<\lambda<|v(0)|$ and $\lambda\ne|\psi(0)|$. Take $q_1,q_2>0$ satisfying that $r_{\psi,\varphi}<q_1<\lambda<q_2<|v(0)|$. Then for each $k\in\mathbb{N}$, by Lemma \ref{ts2} and using the same method as in the proof of Theorem \ref{inver}, one can find $n_k>k$ and $g_k\in A^p_\alpha$ such that
$$\left|\left|\psi_{(n_k)}g_k\right|\right|>q_2^{n_k}||g_k||$$
and
$$\left|\left|\psi_{(n_k+k)}\cdot g_k\comp\varphi_{k}\right|\right|<q_1^{k}\left|\left|\psi_{(n_k)}g_k\right|\right|.$$
Now repeat the proof of Theorem \ref{inver} we can see that there exist $\lambda_0$ such that $|\lambda_0|=\lambda$ and $C_{\psi,\varphi}-\lambda_0$ is not bounded from below. In the last part of the proof of Theorem \ref{inver} we have shown that each possible eigenvalue of $C_{\psi,\varphi}$ must have the same modulus with $\psi(0)$. Since $\lambda\ne|\psi(0)|$, $\lambda_0$ can not be a eigenvalue of $C_{\psi,\varphi}$, hence $C_{\psi,\varphi}-\lambda_0$ has no closed range. This means that $\lambda_0\in\sigma_{e}(C_{\psi,\varphi})$. Finally, by Lemma \ref{ri} and the fact that $\sigma_{e}(C_{\psi,\varphi})$ is closed, we get our conclusion.
\end{proof}

\begin{corollary}\label{main2}
Suppose $\psi\in H^\infty$ and $\varphi$ is an elliptic automorphism of order $\infty$. If $\psi$ has zeros in $\mathfrak{M}_{L^\infty}$, then on $A_\alpha^p$ one has
$$\sigma(C_{\psi,\varphi})=\sigma_e(C_{\psi,\varphi})=\{\lambda\in\mathbb{C} : |\lambda|\leqslant \rho_{\psi,\varphi}\}.$$
\end{corollary}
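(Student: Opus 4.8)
The plan is to reduce to the case of an irrational rotation, prove the two inclusions $\sigma(C_{\psi,\varphi})\subseteq\{|\lambda|\leqslant\rho_{\psi,\varphi}\}$ and $\{|\lambda|\leqslant\rho_{\psi,\varphi}\}\subseteq\sigma_e(C_{\psi,\varphi})$, and then observe that $\sigma_e(C_{\psi,\varphi})\subseteq\sigma(C_{\psi,\varphi})$ closes the loop. By the similarity relation (\ref{xs}), $C_{\psi,\varphi}$ is similar to $C_{\psi\circ\varphi_a,\tau}$ with $\tau$ an irrational rotation, and the hypothesis ``$\psi$ has zeros in $\mathfrak{M}_{L^\infty}$'' is preserved since $\varphi_a$ maps $\mathfrak{M}_{L^\infty}$ onto itself; moreover the outer part of $\psi\circ\varphi_a$ has the same modulus as $v\circ\varphi_a$ on $\mathfrak{M}_{L^\infty}$, so $v\circ\varphi_a$ still has zeros there. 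Hence I may assume $\varphi$ is an irrational rotation and $v$ has a zero in $\mathfrak{M}_{L^\infty}$.

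First the easy inclusion: since $\varphi$ is a rotation, $C_{\varphi_n}$ is unitary on $A^p_\alpha$, so $\rho(C_{\psi,\varphi})=\lim_n\|\psi_{(n)}\|_\infty^{1/n}=\rho_{\psi,\varphi}$ by Proposition \ref{zc}; therefore $\sigma(C_{\psi,\varphi})\subseteq\{\lambda:|\lambda|\leqslant\rho_{\psi,\varphi}\}$. For the reverse inclusion into the essential spectrum, the key point is that $v$ having a zero in $\mathfrak{M}_{L^\infty}$ forces $r_{\psi,\varphi}=\lim_k\inf_{m\in\mathfrak{M}_{L^\infty}}|\psi_{(k)}(m)|^{1/k}=0$ (as noted in the remark right after Lemma \ref{ts2}, because $\psi$ shares its zeros on $\mathfrak{M}_{L^\infty}$ with $v$). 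Now I split according to whether $|v(0)|<\rho_{\psi,\varphi}$ or $|v(0)|=\rho_{\psi,\varphi}$. In either case $r_{\psi,\varphi}=0<|v(0)|$ unless $v\equiv 0$; if $v\equiv0$ then $\psi\equiv0$, $C_{\psi,\varphi}=0$, and the statement is trivial, so assume $v\not\equiv0$. Applying Theorem \ref{noninver} with $r_{\psi,\varphi}=0$ gives $\{\lambda:|\lambda|\leqslant|v(0)|\}\subseteq\sigma_e(C_{\psi,\varphi})$. If $|v(0)|=\rho_{\psi,\varphi}$ we are done. If $|v(0)|<\rho_{\psi,\varphi}$, then Theorem \ref{inver} applies (its hypothesis is exactly $\rho_{\psi,\varphi}>|v(0)|$) and yields $\{\lambda:|v(0)|\leqslant|\lambda|\leqslant\rho_{\psi,\varphi}\}\subseteq\sigma_e(C_{\psi,\varphi})$. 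Combining the two annuli/discs gives $\{\lambda:|\lambda|\leqslant\rho_{\psi,\varphi}\}\subseteq\sigma_e(C_{\psi,\varphi})$.

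Finally, since $\sigma_e(C_{\psi,\varphi})\subseteq\sigma(C_{\psi,\varphi})\subseteq\{\lambda:|\lambda|\leqslant\rho_{\psi,\varphi}\}$ and the outermost and innermost sets now coincide, all three are equal, which is the assertion of Corollary \ref{main2}. The only genuine content is invoking Theorems \ref{inver} and \ref{noninver} correctly; the main thing to be careful about is the bookkeeping at the boundary circle $|\lambda|=|v(0)|$ and the degenerate cases $\psi\equiv0$ and $|v(0)|=\rho_{\psi,\varphi}$, together with checking that conjugation by $C_{\varphi_a}$ really does preserve ``having a zero in $\mathfrak{M}_{L^\infty}$'' — but this last point follows because $\varphi_a$ extends to a homeomorphism of $\mathfrak{M}_\infty$ carrying $\mathfrak{M}_{L^\infty}$ onto itself.
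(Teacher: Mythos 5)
Your proof is correct and follows essentially the same route as the paper, whose one-line argument is precisely that $r_{\psi,\varphi}=0$ when $\psi$ vanishes somewhere on $\mathfrak{M}_{L^\infty}$, so Theorem \ref{noninver} fills the disc $\{|\lambda|\leqslant|v(0)|\}$ and Theorem \ref{inver} the annulus up to $\rho_{\psi,\varphi}$, with the spectral-radius bound of Proposition \ref{zc} closing the circle of inclusions. Your extra bookkeeping (the reduction via (\ref{xs}) with the Shilov boundary preserved, and the degenerate cases $\psi\equiv 0$ and $|v(0)|=\rho_{\psi,\varphi}$) is left implicit in the paper but is entirely consistent with it.
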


\begin{proof}
This corollary is a combination of Theorem \ref{inver} and Theorem \ref{noninver} as soon as one notice that $r_{\psi,\varphi}=0$ in this situation.
\end{proof}

The previous corollary gives a complete description of the spectrum and essential spectrum of $C_{\psi,\varphi}$ when $\psi$ has zeros in $\mathfrak{M}_{L^\infty}$, or equivalently, the outer part of $\psi$ is not bounded from below on $D$.

On the other hand, if all the zeros of $\psi$ lie in $D$, then the outer part of $\psi$ now belongs to $(H^\infty)^{-1}$, and the inner part of $\psi$ is a finite Blaschke product. The next Theorem shows that in this case $\sigma_e(C_{\psi,\varphi})$ no longer coincides with $\sigma(C_{\psi,\varphi})$

\begin{theorem}\label{main3}
Suppose $\psi\in H^\infty$ and $\varphi$ is an elliptic automorphism of order $\infty$. If $\psi$ has no zero in $\mathfrak{M}_{\infty}\backslash D$ but has zeros in $D$, then on $A_\alpha^p$ one has
$$\sigma_e(C_{\psi,\varphi})=\{\lambda\in\mathbb{C} : \rho_{\frac{1}{v},\varphi}^{-1}\leqslant|\lambda|\leqslant \rho_{\psi,\varphi}\}$$
and
$$\sigma(C_{\psi,\varphi})=\{\lambda\in\mathbb{C} : |\lambda|\leqslant \rho_{\psi,\varphi}\}.$$
\end{theorem}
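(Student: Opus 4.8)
=== PROOF PROPOSAL ===

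\medskip

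\noindent\textbf{Plan of proof.}
As usual, by the similarity relation \eqref{xs} it suffices to treat the case where $\varphi$ is an irrational rotation $\varphi(z)=\eta z$ and $a=0$. Write $\psi = v\cdot B$, where $v\in(H^\infty)^{-1}$ is the outer part and $B$ is a finite Blaschke product carrying all the zeros of $\psi$, all of which lie in $D$. Since $|B(m)|=1$ for every $m\in\mathfrak{M}_{L^\infty}$, we have $|\psi(m)|=|v(m)|$ on $\mathfrak{M}_{L^\infty}$, hence $\rho_{\psi,\varphi}=\rho_{v,\varphi}$ and, with $r_{\psi,\varphi}$ as in Lemma \ref{ts2}, $r_{\psi,\varphi}=\rho_{1/v,\varphi}^{-1}>0$.

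\medskip

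\noindent\textbf{The spectrum.}
I will first compute $\sigma(C_{\psi,\varphi})$. Since $|\psi(m)|=|v(m)|$ on $\mathfrak{M}_{L^\infty}$, the spectral radius is $\rho_{\psi,\varphi}=\rho_{v,\varphi}$, so $\sigma(C_{\psi,\varphi})\subset\{|\lambda|\le\rho_{\psi,\varphi}\}$. For the reverse inclusion, note that $0\in\sigma(C_{\psi,\varphi})$: since $\psi$ vanishes at some $z_0\in D$, the operator $C_{\psi,\varphi}$ cannot be invertible (indeed, as remarked at the start of Section 5, invertibility forces $\psi\in(H^\infty)^{-1}$). The rest of the disk $\{0<|\lambda|\le\rho_{\psi,\varphi}\}$ I would obtain by a rotational-symmetry argument together with Theorem \ref{inver}/Theorem \ref{noninver}: the annulus $\{\rho_{\frac1v,\varphi}^{-1}\le|\lambda|\le\rho_{\psi,\varphi}\}$ already lies in $\sigma_e\subset\sigma$ (this is the essential-spectrum part, below), and to fill the inner disk $\{0<|\lambda|<\rho_{\frac1v,\varphi}^{-1}\}$ I would exhibit eigenvalues or approximate eigenvalues. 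The cleanest route: the spectrum of a weighted composition operator with $\varphi$ a rotation is rotation-invariant (the same circle-averaging trick used in Lemma \ref{ri} and in the proof of Theorem \ref{inver}, replacing $\lambda$ by $\lambda e^{2\pi i s/n}$), so it is a union of circles and is closed; hence it suffices to produce, for the value $\lambda = r_{\psi,\varphi}=\rho_{1/v,\varphi}^{-1}$ which lies in $\sigma$, and to know $0\in\sigma$, that the spectrum is connected. Connectedness follows because $\sigma(C_{\psi,\varphi})$ and $\sigma_e(C_{\psi,\varphi})$ are rotation-invariant compact sets and $\sigma\setminus\sigma_e$ consists of isolated eigenvalues of finite multiplicity lying in the bounded complementary components of $\sigma_e$; since $\sigma_e$ is the annulus $\{\rho_{\frac1v,\varphi}^{-1}\le|\lambda|\le\rho_{\psi,\varphi}\}$ whose bounded complementary component is exactly the open disk of radius $\rho_{\frac1v,\varphi}^{-1}$, and since $C_{\psi,\varphi}$ is non-invertible so $0\in\sigma$, the whole closed disk of radius $\rho_{\frac1v,\varphi}^{-1}$ is swallowed. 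Combined with the annulus this gives $\sigma(C_{\psi,\varphi})=\{|\lambda|\le\rho_{\psi,\varphi}\}$.

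\medskip

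\noindent\textbf{The essential spectrum.}
The inclusion $\{\rho_{\frac1v,\varphi}^{-1}\le|\lambda|\le\rho_{\psi,\varphi}\}\subset\sigma_e(C_{\psi,\varphi})$ comes from Theorem \ref{inver} and Theorem \ref{noninver}: indeed $\rho_{\psi,\varphi}=\rho_{v,\varphi}$ and Theorem \ref{inver} applied to $v$ (together with $|\psi(m)|=|v(m)|$ on $\mathfrak{M}_{L^\infty}$, which makes the constructions in that proof go through verbatim for $\psi$) gives $\{|v(0)|\le|\lambda|\le\rho_{\psi,\varphi}\}\subset\sigma_e$, while Theorem \ref{noninver} gives $\{r_{\psi,\varphi}\le|\lambda|\le|v(0)|\}=\{\rho_{\frac1v,\varphi}^{-1}\le|\lambda|\le|v(0)|\}\subset\sigma_e$; their union is the full annulus. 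For the reverse inclusion I must show that every $\lambda$ with $|\lambda|<\rho_{\frac1v,\varphi}^{-1}$ makes $C_{\psi,\varphi}-\lambda$ Fredholm. The key point: for such $\lambda$, $C_{v,\varphi}-\lambda$ is invertible (by Corollary \ref{main1}, since $\rho_{1/v,\varphi}^{-1}\le|\lambda|$ fails), and $C_{\psi,\varphi}=C_{B,\mathrm{id}}\,C_{v,\varphi}\cdot(\text{correction})$ is not quite right, so instead I would write $C_{\psi,\varphi}=M_{B}\,C_{v,\varphi}$ only when $\varphi=\mathrm{id}$; in general $\psi_{(n)}=v_{(n)}B_{(n)}$ with $B_{(n)}=\prod_{j=0}^{n-1}B\circ\varphi_j$ a Blaschke product of degree $n\cdot(\deg B)$, so the difference between $C_{\psi,\varphi}$ and $C_{v,\varphi}$ is implemented by multiplication-type operators that are Fredholm perturbations. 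Concretely, I would show $C_{\psi,\varphi}-C_{v,\varphi}$, or rather the relation between $(C_{\psi,\varphi}-\lambda)$ and $(C_{v,\varphi}-\lambda)$, differs by a compact operator: multiplication by a finite Blaschke product $B$ on $A_\alpha^p$ is Fredholm of index $-\deg B$, and $M_B-I$ has range issues only near finitely many points; combined with $C_v,\varphi$ being unitary-times-invertible this should show $C_{\psi,\varphi}-\lambda$ is Fredholm for $|\lambda|<r_{\psi,\varphi}$. The main obstacle I anticipate is exactly this last step — proving Fredholmness of $C_{\psi,\varphi}-\lambda$ in the inner disk. One cannot simply invoke "multiplication by a Blaschke product is Fredholm" because $C_{\psi,\varphi}-\lambda$ is not literally $M_B$ composed with an invertible operator; instead I would argue that for $|\lambda|$ small the Neumann-type series $\sum_n \lambda^n (C_{v,\varphi})^{-n-1} C_{B_{(n)}\text{-type}}$ converges modulo compacts, or, more robustly, use the fact that $B$ being a finite Blaschke product means $C_{\psi,\varphi}$ and $C_{v,\varphi}$ have the same image in the Calkin algebra up to the invertible/Fredholm multiplier induced by $B$, hence the same essential spectrum after accounting for the index shift — but since we only need membership/non-membership in $\sigma_e$, not the index, this shift is harmless. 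Making this comparison precise, i.e. identifying a concrete compact operator $K$ with $(C_{\psi,\varphi}-\lambda) - (C_{v,\varphi}-\lambda)(I + \text{Fredholm correction}) = K$, is the technical heart and where I would spend most effort; the rotation-invariance of $\sigma_e$ (Lemma \ref{ri}) then upgrades a single-$\lambda$ statement to the full punctured disk.
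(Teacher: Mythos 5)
Your outline gets the easy half right (the annulus $\{\rho_{1/v,\varphi}^{-1}\leqslant|\lambda|\leqslant\rho_{\psi,\varphi}\}\subset\sigma_e$ does follow from Theorems \ref{inver} and \ref{noninver}, exactly as in the paper), but the decisive step --- showing that $C_{\psi,\varphi}-\lambda$ is Fredholm for every $|\lambda|<\rho_{1/v,\varphi}^{-1}$ --- is not proved, and the routes you sketch for it do not work. First, your side remark is wrong: the factorization $C_{\psi,\varphi}=M_B\,C_{v,\varphi}$ holds for \emph{every} symbol $\varphi$, since $C_{\psi,\varphi}f=\psi\cdot(f\comp\varphi)=B\cdot\bigl(v\cdot(f\comp\varphi)\bigr)$; but even with it, your two proposals fail. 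The difference $C_{\psi,\varphi}-C_{v,\varphi}=M_{B-1}C_{v,\varphi}$ is not compact, because a multiplication operator $M_{B-1}$ on $A_\alpha^p$ is compact only when $B\equiv1$. And the principle ``same image in the Calkin algebra up to a Fredholm multiplier, hence the same essential spectrum'' is not valid: invertibility of $\pi(M_B)$ in the Calkin algebra only gives $0\notin\sigma_e(C_{\psi,\varphi})$; since $\pi(M_B)$ and $\pi(C_{v,\varphi})$ do not commute, nothing relates $\sigma_e(M_BC_{v,\varphi}-\lambda)$ to $\sigma_e(C_{v,\varphi}-\lambda)$ for $\lambda\neq0$. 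So the ``technical heart'' you flag is genuinely missing. The paper closes it by a direct estimate on powers: writing $\psi=\tau v$ with $\tau$ a finite Blaschke product, one has $\inf_D|v_{(n)}|=\inf_{\mathfrak{M}_{L^\infty}}|\psi_{(n)}|>q^n$ for large $n$ and $|\tau_{(n)}|>(1-\epsilon)^n$ on an annulus $R<|z|<1$ which carries a fixed proportion of every Bergman norm; this yields $\|C_{\psi,\varphi}^nf\|\geqslant q^n(1-\epsilon)^n c\,\|f\|$ and hence $C_{\psi,\varphi}-\lambda$ bounded below for $|\lambda|<q$. Then Fredholmness of $C_{\psi,\varphi}$ itself (finite codimension of the range, because $\tau$ is a finite Blaschke product) plus stability of the semi-Fredholm index on the connected disk $\{|\lambda|<\rho_{1/v,\varphi}^{-1}\}$ gives that $C_{\psi,\varphi}-\lambda$ is Fredholm of nonzero index there.

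The same omission undermines your computation of $\sigma(C_{\psi,\varphi})$. From ``$\sigma_e$ is the annulus, $\sigma$ is rotation-invariant, and $0\in\sigma$'' you cannot conclude that the inner disk is ``swallowed'': by the component dichotomy for $\mathbb{C}\setminus\sigma_e$, the point $0$ could a priori be an isolated eigenvalue of finite multiplicity (rotation invariance says nothing at $0$, which is fixed by rotations). What rules this out is the index: $C_{\psi,\varphi}$ is injective, so index zero at $0$ would force invertibility, contradicting the zeros of $\psi$; hence the index is nonzero at $0$ and, by constancy on the component, nonzero on the whole inner disk, which is therefore contained in $\sigma\setminus\sigma_e$. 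This is exactly what the paper's bounded-below-plus-index argument delivers in one stroke, so both displayed identities follow; in your write-up neither the Fredholmness nor the nonvanishing of the index is established.
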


\begin{proof}
For any fixed $\lambda$ with $|\lambda|<\rho_{\frac{1}{v},\varphi}^{-1}$, we will show that $C_{\psi,\varphi}-\lambda$ is bounded from below.

Take $q>0$ such that $|\lambda|<q<\rho_{\frac{1}{v},\varphi}^{-1}$. Assume that $\psi=\tau\cdot v$ where $\tau$ and $v$ are the inner and outer part of $\psi$ respectively. Since $\psi$ has no zero in $\mathfrak{M}_{\infty}$, $\tau$ is a finite Blaschke product and $v\in (H^\infty)^{-1}$. By the definition of $\rho_{\frac{1}{v},\varphi}$, for $n_0\in\mathbb{N}$ large enough we have
$$q^{n_0}<\inf_{m\in\mathfrak{M}_{L^\infty}}|\psi_{(n_0)}(m)|=\inf_{z\in D}|v_{(n_0)}(z)|.$$

For any $\epsilon>1$ there exist $R\in(0,1)$ such that $|\tau(z)|>1-\epsilon$ whenever $R<|z|<1$. Let $R'=\frac{R+1}{2}$. For any fixed $f\in A_\alpha^p $, write $\|f\|^p=I_1+I_2$, where
$$I_1=\frac{1+\alpha}{\pi}\int_0^{R}dr\int_0^{2\pi}|f(r\text{e}^{i\theta})|^p(1-r^2)^\alpha rd\theta$$
and
$$I_2=\frac{1+\alpha}{\pi}\int_R^{1}dr\int_0^{2\pi}|f(r\text{e}^{i\theta})|^p(1-r^2)^\alpha rd\theta.$$
Then
\begin{align*}
I_2&\geqslant\frac{1+\alpha}{\pi}\int_R^{R'}dr\int_0^{2\pi}|f(r\text{e}^{i\theta})|^p(1-r^2)^\alpha rd\theta\\
&\geqslant\frac{1+\alpha}{\pi}(1-R'^2)^\alpha R\int_R^{R'}dr\int_0^{2\pi}|f(r\text{e}^{i\theta})|^pd\theta\\
&\geqslant \frac{1+\alpha}{\pi}(1-R'^2)^\alpha (R'-R)\int_0^{R}dr\int_0^{2\pi}|f(r\text{e}^{i\theta})|^pd\theta\\
&\geqslant(1-R'^2)^\alpha (R'-R) I_1.
\end{align*}
So $I_2\geqslant\frac{C}{1+C}\|f\|^p$, where $C=(1-R'^2)^\alpha (R'-R)$. Therefore, for $n>n_0$ we have
\begin{align*}
\left|\left|C_{\psi,\varphi}^{n}f\right|\right|^p&\geqslant q^{np}\cdot\left|\left|\tau_{(n)}\cdot f\comp\varphi_n\right|\right|^p\\
&\geqslant q^{np}\cdot\frac{1+\alpha}{\pi}\int_R^{1}dr\int_0^{2\pi}|\tau_{(n)}(r\text{e}^{i\theta})f\comp\varphi_n(r\text{e}^{i\theta})|^p(1-r^2)^\alpha rd\theta\\
&\geqslant q^{np}(1-\epsilon)^{np}I_2\\
&\geqslant q^{np}(1-\epsilon)^{np}\frac{C}{1+C}\|f\|^p.
\end{align*}
By taking $\epsilon>0$ sufficiently small and $n\in\mathbb{N}$ large enough, we can make $q^{np}(1-\epsilon)^{np}\frac{C}{1+C}$ greater than $|\lambda|^{np}$. This means that $C_{\psi,\varphi}-\lambda$ is bounded from below.

Since $\tau$ is a finite Blaschke product, the codimension of the range of $C_{\psi,\varphi}$ is finite, Hence $C_{\psi,\varphi}$ is Fredholm. By the stability of the index of Fredholm operators, we can infer that $C_{\psi,\varphi}-\lambda$ is always a Fredholm operator whose index is not zero whenever $|\lambda|<\rho_{\frac{1}{v},\varphi}^{-1}$. Thus we have proven that
$$\{\lambda\in\mathbb{C} :  |\lambda|< \rho_{\frac{1}{v},\varphi}^{-1}\}\subset\sigma(C_{\psi,\varphi})\backslash\sigma_e(C_{\psi,\varphi}).$$
On the other hand, Theorem \ref{inver} and Theorem \ref{noninver} shows that
$$\{\lambda\in\mathbb{C} : \rho_{\frac{1}{v},\varphi}^{-1}\leqslant|\lambda|\leqslant \rho_{\psi,\varphi}\}\subset\sigma_e(C_{\psi,\varphi}).$$
\end{proof}

Finally there remains the case where $\psi$ has zeros in $\mathfrak{M}_{\infty}\backslash D$ but has no zero in $\mathfrak{M}_{L^\infty}$. We shall list this case as an open question here.

\begin{problem}
Suppose $\psi\in H^\infty$ and $\varphi$ is an irrational rotation. Does one have
$$\sigma_e(C_{\psi,\varphi})=\{\lambda\in\mathbb{C} : |\lambda|\leqslant\rho_{\psi,\varphi}\}$$
on $A_\alpha^p$ whenever $\psi$ has zeros in $\mathfrak{M}_{\infty}\backslash D$ but has no zero in $\mathfrak{M}_{L^\infty}$.

\end{problem}

Note that this will happen only when the outer part of $\psi$ is in $(H^\infty)^{-1}$ and the inner part of $\psi$ is not a finite Blaschke product. Theorem \ref{inver} and Theorem \ref{noninver} are still available in this situation, so we have
$$\{\lambda\in\mathbb{C} : \rho_{\frac{1}{v},\varphi}^{-1}\leqslant|\lambda|\leqslant \rho_{\psi,\varphi}\}\subset\sigma_e(C_{\psi,\varphi}).$$
Moreover, since $C_{\psi,\varphi}$ is not Fredholm in this case, we have $0\in\sigma_e(C_{\psi,\varphi})$. In fact, when the outer part of  $\psi$ is in $(H^\infty)^{-1}$ , it is known that $C_{\psi,\varphi}$ has closed range if and only if the inner part of $\psi$ is a finite product of interpolating Blaschke products, if and only if $\psi$ does not vanish on any trivial Gleason part of $\mathfrak{M}_{\infty}$. See \cite{rec2,rec1}. So if all the zeros of $\psi$ lie in non-trivial Gleason parts, then some neighbourhood of $0$ is contained in $\sigma_e(C_{\psi,\varphi})\backslash \sigma_{ap}(C_{\psi,\varphi})$; otherwise, if $\psi$ take zeros on some trivial Gleason parts, then $0\in\sigma_e(C_{\psi,\varphi})\cap\sigma_{ap}(C_{\psi,\varphi})$. Therefore, the structure of the (essential) spectrum of $C_{\psi,\varphi}$ in these situations can be expected to rely much on the location of the zeros of $\psi$ as well.

\end{document}